\newtheorem{thm}{Theorem}[section] \newtheorem{lem}[thm]{Lemma}
\newtheorem{cor}[thm]{Corollary} 
\newtheorem{prop}[thm]{Proposition} \theoremstyle{definition}
\def\co{\colon\thinspace}
\begin{document}

\date{\today}
\author{Michael Heusener and Richard Weidmann}
\title{Generating pairs of 2-bridge knot groups}

\begin{abstract}
We study Nielsen equivalence classes of generating pairs of Kleinian
groups and HNN-extensions. We establish the following facts:

\begin{enumerate}
\item Hyperbolic 2-bridge knot groups have infinitely many Nielsen classes of generating pairs.
\item For any $n\in\mathbb N$ there is a closed hyperbolic 3-manifold whose fundamental group has $n$ distinct Nielsen classes of generating pairs.
\item Two pairs of elements of a fundamental group of an HNN-extension are Nielsen equivalent iff they are so for the obvious reasons.
\end{enumerate}
\end{abstract}

\maketitle

\section*{Introduction}

The main purpose of this note is to study Nielsen equivalence classes of generating pairs of fundamental groups of hyperbolic 2-bridge knot spaces and of closed hyperbolic 3-manifolds obtained from those spaces by Dehn fillings.

\smallskip It is a result of Delzant (following Gromov) \cite{D} that any
torsion-free hyperbolic group has only finitely many Nielsen classes of
generating pairs. In the case of closed hyperbolic 3-manifolds Delzant's proof
actually provides an explicit upper bound for this number in terms of the
injectivity radius as observed by Agol, see  \cite{Sou1} for an account of
Agol's ideas. The finiteness of
Nielsen classes of
generating tuples of fundamental groups of closed hyperbolic 3-manifolds of arbitrary size was established in \cite{KW}.

\smallskip The examples constructed in this article show that this finiteness fails for cusped hyperbolic 3-manifolds. We establish the following:

\begin{thm}\label{thma}
Let $\mathfrak k$ be a hyperbolic $2$-bridge knot with knot exterior $M$. Then $\pi_1(M)$ has infinitely many Nielsen classes of generating pairs. 
\end{thm}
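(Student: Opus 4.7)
The plan is to combine the Nielsen-rigidity result for HNN extensions (item (3) of the abstract) with an HNN decomposition of $G = \pi_1(M)$. I will exhibit an explicit infinite sequence of generating pairs of $G$ whose ``$t$-depth'' in this decomposition diverges, and then apply item (3) to conclude that these pairs represent infinitely many distinct Nielsen classes.

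First I would set up the HNN decomposition. One natural choice is via Magnus' method applied to the standard one-relator 2-bridge presentation $G = \langle a, b \mid a w = w b \rangle$: after the change of variables $s := b$, $t := ab^{-1}$, the defining relation has exponent sum zero in $s$, so $G$ becomes an HNN extension $G = A *_{C, \varphi}$ with stable letter $s$ and vertex group $A$ a one-relator group generated by the $s$-conjugates of $t$, with explicit edge subgroups $C, \varphi(C) \le A$. An alternative is to cut $M$ open along a non-separating Seifert surface $\Sigma$, giving an HNN structure whose stable letter is a meridian transverse to $\Sigma$. In either realization, the stable letter is conjugate to a meridian of $\mathfrak k$.

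Next, for each $n \in \mathbb N$ I would construct a generating pair $P_n = (t, u_n)$ of $G$ in which $u_n$ lies in the vertex group $A$ and is built from the conjugates $t^i u_0 t^{-i}$ for $|i| \le n$, starting from a fixed element $u_0 \in A$ chosen so that $(t, u_0)$ already generates $G$. Checking that $\{t, u_n\}$ generates $G$ for every $n$ will rest on the observation that the stable letter together with a single element of the vertex group suffices, via HNN conjugation, to recover the standard generating set $\{a, b\}$. With this family in hand, item (3) will reduce any putative Nielsen equivalence $P_n \sim P_m$ to a short list of ``obvious'' moves -- Nielsen equivalence inside a vertex group, conjugation in $G$, and a finite list of moves involving the stable letter -- each of which changes the $t$-depth of a pair by at most a bounded amount independent of $n$. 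Since the $t$-depth of $P_n$ grows without bound, infinitely many Nielsen classes follow.

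The hardest step will be the construction of the family $u_n$. The requirement that $u_n$ reach arbitrarily deep into the HNN structure is in direct tension with the global generation condition $\langle t, u_n \rangle = G$, and naively deep elements typically fail to generate. The one-relator nature of 2-bridge knot groups, together with the structure of the peripheral subgroup, should provide just enough rigidity to write down a concrete family, but the delicate case analysis needed to verify generation and the depth control simultaneously will be the main technical burden of the proof.
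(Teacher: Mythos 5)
Your route is genuinely different from the paper's, and it contains gaps that are more than technical. The paper never applies Theorem~\ref{graphsofgroups} to $\pi_1(M)$ itself: it takes the pairs $P^N=(m,gl^N)$ obtained by twisting a meridian generating pair along the longitude, pushes them into the closed fillings $M(1/n)$, and distinguishes them there by hyperbolic geometry (tubes around the core geodesic playing the role of vertex groups, piecewise geodesics, Meyerhoff's tube estimates); Theorem~\ref{thma} then follows because Nielsen equivalence descends to quotients. The HNN theorem is only a warm-up whose \emph{argument} is imitated geometrically, not a tool applied to $\pi_1(M)$.

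The concrete gaps in your plan are these. First, what Theorem~\ref{graphsofgroups} actually reduces you to is showing that $u_n$ is not conjugate to $u_m^{\pm1}$ in $G$ for $n\neq m$; your proposed invariant, the ``$t$-depth'', is not a conjugacy invariant, and your claim that each of the obvious moves changes it by a bounded amount is false, since one of those moves is conjugation by an \emph{arbitrary} element of $G$ (e.g.\ by $t^k$), which shifts any naive depth unboundedly. You need a genuine conjugacy invariant, and none is proposed. Second, the family $u_n$ is not constructed, and its description is internally inconsistent outside the fibered case: in an HNN extension $A\ast_C$ the conjugates $t^iu_0t^{-i}$ of a general $u_0\in A$ do not lie in $A$, so ``$u_n\in A$ built from the $t^iu_0t^{-i}$'' only makes sense when $A$ is normal, i.e.\ when the splitting comes from a fibration --- and not every hyperbolic $2$-bridge knot is fibered. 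Third, even in the fibered case the existence of infinitely many pairwise non-conjugate elliptic $u$ with $\langle u,t\rangle=G$ is exactly the hard point: the first-order invariant (the image of $u$ in the Alexander module, which for a $2$-bridge knot is cyclic) distinguishes only finitely many classes modulo the action of $\pm t^{\mathbb Z}$, so the infinitude must be detected by genuinely non-abelian means. That is the step you defer, and without it the proof does not go through.
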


We further show that there is no uniform bound on the number of Nielsen classes of generating pairs of fundamental groups of closed hyperbolic 3-manifolds if the assumption on the injectivity radius is dropped. The constructed manifolds are obtained from hyperbolic 2-bridge knot complements by increasingly complicated Dehn fillings.

\begin{thm}\label{thmb} For any $n$ there exists a closed hyperbolic 3-manifold $M$ such that $\pi_1(M)$ has at least $n$ distinct Nielsen classes of generating pairs.
\end{thm}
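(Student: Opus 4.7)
The plan is to combine Theorem~\ref{thma} with Thurston's hyperbolic Dehn surgery theorem. Fix a hyperbolic 2-bridge knot $\mathfrak{k}$ with exterior $M_0$ and group $\Gamma_0 := \pi_1(M_0)$, and use Theorem~\ref{thma} to select $n$ generating pairs $(x_1,y_1),\ldots,(x_n,y_n)$ of $\Gamma_0$ lying in $n$ distinct Nielsen classes. For every non-trivial slope $s$ on the cusp torus, write $\Gamma_s := \pi_1(M_0(s))$ and denote by $(\bar{x}_i^s,\bar{y}_i^s)$ the image of $(x_i,y_i)$ under the natural surjection $\Gamma_0\twoheadrightarrow\Gamma_s$; each such image generates $\Gamma_s$, since generation is preserved by quotients. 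By the hyperbolic Dehn surgery theorem, $M_0(s)$ is closed hyperbolic for all but finitely many $s$. It therefore suffices to exhibit a single slope $s$ for which the $n$ filled pairs remain in $n$ distinct Nielsen classes of $\Gamma_s$.

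I would argue by contradiction: assume that along an infinite sequence $(s_k)$ of hyperbolic slopes there are fixed indices $i\neq j$ (after passing to a subsequence) and automorphisms $\phi_k\in\mathrm{Aut}(F_2)$ realizing a Nielsen equivalence
\[
\phi_k\cdot(\bar{x}_i^{s_k},\bar{y}_i^{s_k})\sim(\bar{x}_j^{s_k},\bar{y}_j^{s_k})\quad\text{in }\Gamma_{s_k},
\]
where $\sim$ allows post-composition by an inner automorphism. The key step is to show that the $\phi_k$ can be assumed, along a further subsequence, to be equal to a single $\phi\in\mathrm{Aut}(F_2)$. Once this is in hand, the same element $\phi$ witnesses the above Nielsen equivalence in infinitely many quotients $\Gamma_{s_k}$; because the slopes $s_k$ can be chosen so that $\bigcap_k\ker(\Gamma_0\twoheadrightarrow\Gamma_{s_k})$ is trivial (the normal closures of the filling elements in the peripheral $\mathbb{Z}^2$ together generate all of $\Gamma_0$ in an intersection-theoretic sense), the relation $\phi(x_i,y_i)\sim(x_j,y_j)$ must already hold in $\Gamma_0$, contradicting the original choice of Nielsen inequivalent pairs.

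The main obstacle, and the heart of the proof, is to bound the word length of $\phi_k$ in $\mathrm{Aut}(F_2)$ independently of $k$. I would address this by passing to the holonomy representations $\rho_k\colon\Gamma_{s_k}\hookrightarrow\mathrm{PSL}_2(\mathbb{C})$, which by the hyperbolic Dehn surgery theorem converge algebraically to the holonomy of $M_0$ after suitable conjugations. Each pair $(\bar{x}_i^{s_k},\bar{y}_i^{s_k})$ determines a point
\[
\chi_i^{(k)}=\bigl(\mathrm{tr}\,\rho_k(\bar{x}_i^{s_k}),\;\mathrm{tr}\,\rho_k(\bar{y}_i^{s_k}),\;\mathrm{tr}\,\rho_k(\bar{x}_i^{s_k}\bar{y}_i^{s_k})\bigr)\in\mathbb{C}^3
\]
of the $F_2$-character variety, and $\chi_i^{(k)}\to\chi_i^{(\infty)}$ where $\chi_i^{(\infty)}$ is the trace triple associated to $(x_i,y_i)$ under the holonomy of $M_0$. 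The action of $\phi_k$ on the character variety sends $\chi_i^{(k)}$ to $\chi_j^{(k)}$. Since the pairs $(x_i,y_i)$ and $(x_j,y_j)$ are Nielsen inequivalent in $\Gamma_0$, their character points $\chi_i^{(\infty)}$ and $\chi_j^{(\infty)}$ lie in distinct $\mathrm{Out}(F_2)$-orbits, and local discreteness of $\mathrm{Out}(F_2)$-orbits about a discrete faithful character (via a Margulis-type estimate: any non-trivial Nielsen move must displace at least one trace by a uniform amount in a neighbourhood of the holonomy of $M_0$) forces the $\phi_k$ into a finite subset of $\mathrm{Aut}(F_2)$ for all large $k$, completing the argument.
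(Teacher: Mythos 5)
Your overall architecture runs in the opposite direction from the paper's, and this matters. In the paper, Theorem~\ref{thma} is itself deduced as a corollary of the closed-manifold statement (via Theorem~\ref{main}): Nielsen-inequivalent pairs in $\pi_1(M(1/n))$ pull back to Nielsen-inequivalent pairs in $\pi_1(M)$. There is no independent proof of Theorem~\ref{thma} on offer, so taking it as the input to prove Theorem~\ref{thmb} is circular in the context of this paper. Moreover, the paper does not work with arbitrary inequivalent pairs supplied by Theorem~\ref{thma}; it works with the explicit pairs $P^N=(m,gl^N)$, and all the geometric estimates are tailored to them.

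The genuine gap, however, is your key step: that the automorphisms $\phi_k$ realizing the putative equivalences in $\Gamma_{s_k}$ can be forced into a finite subset of $\mathrm{Aut}(F_2)$ by a ``Margulis-type estimate'' on traces near the discrete faithful character. No such uniform displacement bound exists, and asserting it begs essentially the whole theorem. The action of $\mathrm{Out}(F_2)$ on the character variety is by polynomial maps which are not equicontinuous, so convergence $\chi_i^{(k)}\to\chi_i^{(\infty)}$ and $\chi_j^{(k)}\to\chi_j^{(\infty)}$ gives no control whatsoever on $\phi_k$: an automorphism far out in $\mathrm{Out}(F_2)$ can move a point near $\chi_i^{(\infty)}$ to a point near $\chi_j^{(\infty)}$ even though it moves $\chi_i^{(\infty)}$ itself far away. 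Concretely, in $\pi_1(M(1/n))$ the new relation $m_n=l_n^{-n}$ makes $(m_n,g_nl_n^N)$ Nielsen equivalent to $(m_n,g_nl_n^{N-jn})$ via $(a,b)\mapsto(a,ba^{j})$; which automorphisms act almost trivially on the relevant characters therefore depends on the filling slope, so no neighbourhood of the limiting character carries a displacement bound uniform over all fillings. This is exactly the difficulty the paper spends its technical core on: Proposition~\ref{osbornezieschang} reduces an arbitrary automorphism to the question of which positive words can represent a conjugate of $m_n^{\pm1}$ or of $(g_nl_n^{N'})^{\pm1}$, and Lemmas~\ref{lem1} and \ref{lem2} answer this by translation-length estimates on the boundaries of the Margulis tubes $N_n$ around the core geodesics (Proposition~\ref{michael}, with the crucial uniformity in the exponent $k$ of $m_n=l_n^{-n}$) combined with the piecewise-geodesic machinery of Lemmas~\ref{quasigeodesic} and~\ref{lemmawinkel}. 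A secondary gap: even with a fixed $\phi$, conjugacy of the images in infinitely many quotients does not immediately yield conjugacy in $\Gamma_0$, since the conjugating elements vary with $k$; this too requires a geometric-convergence argument rather than just triviality of the intersection of the kernels.
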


The non-uniqueness of Nielsen classes of generating tuples of fundamental
groups of hyperbolic 3-manifolds is not new. It is an immediate consequence of
the work of Lustig and Moriah \cite{LM} that there exist closed hyperbolic 3-manifolds whose fundamental groups are of rank $r$ and have at least $2^r-2$ Nielsen classes of generating $r$-tuples. Note that while the distinct Nielsen classes exhibited by Lustig and Moriah are geometric and therefore correspond to non-isotopic Heegaard splittings  this is not true in the current setting. Indeed by Kobayashi's work \cite{Ko} it is known that 2-bridge knot exteriors admit at most 6 isotopy classes of Heegaard splittings of genus $2$, thus almost all of the generating pairs exhibited in this note are non-geometric.

\smallskip The proofs rely on some simple facts about Nielsen equivalence of generating pairs due to Nielsen and in the case of Theorem~\ref{thma} some basic hyperbolic geometry. For the proof of Theorem~\ref{thmb} we further exploit the geometric convergence of manifolds obtained by increasingly complicated Dehn surgery on a 2-bridge knot to the hyperbolic knot complement.

\smallskip

After discussing some basic material on Nielsen equivalences of generating pairs we first prove a simple theorem about generating pairs of HNN-extensions. The argument in this case is easier but similar to the argument needed in the proofs of the two theorems discussed above. We will then establish a simple fact about about piecewise geodesics in hyperbolic space before we proceed with the proof of the main theorem.

\smallskip The authors would like to thank Yoav Moriah for his useful comments and Makoto Sakuma for a stimulating discussion. Moreover the authors would like to thank the referee whose numerous suggestions resulted in a greatly improved exposition and a shorter proof of Theorem~\ref{graphsofgroups}.

\section{Nielsen equivalence of pairs of elements}

Let $G$ be a group and $\mathcal T=(g_1,\ldots ,g_k)$ and $\mathcal T'=(h_1,\ldots ,h_k)$ be two $k$-tuples of elements of $G$. We say that $\mathcal T$ and $\mathcal T'$ are {\em elementarily equivalent} if one of the following holds:

\begin{enumerate}
\item $h_i=g_{\sigma(i)}$ for $1\le i\le k$ and some $\sigma\in S_k$.
\item $h_i=g_i^{-1}$ for some $i\in\{1,\ldots ,k\}$ and $h_j=g_j$ for $j\neq i$.
\item $h_i=g_ig_j^{\varepsilon}$ for some $i\neq j$ with $\varepsilon\in\{-1,1\}$ and $h_l=g_l$ for $l\neq i$.
\end{enumerate}

Two tuples are further called {\em Nielsen equivalent} if one can be transformed into the other by a finite sequence of elementary equivalences. Note that the elementary equivalences are also called \emph{Nielsen transformations} or \emph{Nielsen moves}.

\smallskip The fact that $\hbox{Aut } F_n$ is generated by so-called elementary Nielsen automorphisms implies that the above definition of Nielsen equivalence can be rephrased in the following way.

\smallskip Let $F_k=F(x_1,\ldots ,x_k)$ be the free group of rank $k$. Then
two $k$-tuples $\mathcal T=(g_1,\ldots ,g_k)$ and $\mathcal T'=(h_1,\ldots ,h_k)$ are Nielsen equivalent iff there exists a homomorphism $\phi:F_k\to G$ and an automorphism $\alpha$ of $F_k$ such that the following hold:
\begin{enumerate}
\item $g_i=\phi(x_i)$ for $1\le i\le k$.
\item $h_i=\phi\circ\alpha(x_i)$ for $1\le i\le k$.
\end{enumerate}

\medskip Deciding Nielsen equivalence of two tuples or classifying all Nielsen equivalence classes is usually a very difficult problem and undecidable in general. However in the case of pairs of elements the situation tends to be much easier. The reason is that the automorphism group of $F_2$ and the structure of primitive elements in $F_2$ are particularly easy to understand. 

\medskip Nielsen \cite{N} observed that any automorphism of $F(a,b)$ preserves the  commutator $[a,b]=aba^{-1}b^{-1}$ up to conjugation and inversion. This is easily verified by checking that it holds for the elementary Nielsen automorphisms. As a consequence we get the following simple and much used test for Nielsen equivalence of pairs of elements.

\begin{prop}\label{Nielsen} Let $G$ be a group and $(x,y)$ and $(x',y')$ be two pairs of elements. If $(x,y)\sim(x',y')$ then $[x,y]$ is conjugate to $[x',y']$ or $[x',y']^{-1}$.
\end{prop}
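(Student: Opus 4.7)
The plan is to deduce this directly from the reformulation of Nielsen equivalence in terms of the free group $F_2$ together with Nielsen's observation recalled just before the statement.

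First I would use the equivalent characterization: since $(x,y)\sim(x',y')$, there is a homomorphism $\phi\co F_2=F(a,b)\to G$ and an automorphism $\alpha$ of $F_2$ with $\phi(a)=x$, $\phi(b)=y$, and $\phi\circ\alpha(a)=x'$, $\phi\circ\alpha(b)=y'$. Then, since $\phi$ is a homomorphism,
\[
[x',y']=[\phi(\alpha(a)),\phi(\alpha(b))]=\phi\bigl(\alpha([a,b])\bigr).
\]

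Next I would invoke Nielsen's observation, which is quoted in the paragraph immediately above the proposition: every automorphism of $F(a,b)$ sends $[a,b]$ to a conjugate of $[a,b]^{\pm 1}$. Hence there exist $w\in F_2$ and $\varepsilon\in\{-1,+1\}$ with $\alpha([a,b])=w\,[a,b]^{\varepsilon}w^{-1}$. Applying $\phi$ gives
\[
[x',y']=\phi(w)\,[x,y]^{\varepsilon}\,\phi(w)^{-1},
\]
so $[x,y]$ is conjugate in $G$ to $[x',y']$ (if $\varepsilon=1$) or to $[x',y']^{-1}$ (if $\varepsilon=-1$), which is the desired conclusion.

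There is essentially no obstacle: the only nontrivial ingredient is Nielsen's lemma that automorphisms of $F_2$ preserve $[a,b]$ up to conjugation and inversion, and this is already stated and justified in the paragraph above (one just checks it on the three types of elementary Nielsen automorphisms, which generate $\mathrm{Aut}(F_2)$). Everything else is a routine application of the functoriality of the commutator under $\phi$. The proof is thus a one-line consequence of the setup, and I would present it in exactly the order above.
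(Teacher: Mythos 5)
Your proof is correct and follows exactly the route the paper intends: the paper states the proposition as an immediate consequence of Nielsen's observation that automorphisms of $F(a,b)$ preserve $[a,b]$ up to conjugation and inversion, combined with the $\phi\circ\alpha$ reformulation of Nielsen equivalence, which is precisely your argument.
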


While convenient, the above criterion is in general not sufficient to distinguish all Nielsen classes. Another useful fact in distinguishing Nielsen classes of pairs is that primitive elements of $F(a,b)$ are well understood, in fact in \cite{OZ} Osborne and Zieschang gave a complete description of primitive elements of the free groups of rank $2$; recall that an element of a free group or a free Abelian group is called primitive if it is part of some basis. 
The proof in \cite{OZ} relies on the fact already observed by Nielsen \cite{N} that for any primitive element $p$ in the abelianization of $F(a,b)$ there is a unique conjugacy class of primitive elements in $F(a,b)$ that is mapped to $p$.

\smallskip 
An immediate consequence of their description is the proposition below, see also \cite{CMZ}. We give a proof of the weaker statement that we need as we can without breaking a sweat, note that $\varepsilon$ and $\eta$ below are simply the signs of the exponents of $a$ and $b$ in the abelianization of $g$. 

\begin{prop}\label{osbornezieschang} Let $g$ be a primitive element of $F(a,b)$. Then there exist $\varepsilon,\eta\in\{-1,1\}$  such that $g$ is conjugate to an element represented by a positive word in $a^{\varepsilon}$ and $b^{\eta}$.
\end{prop}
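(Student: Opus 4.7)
The plan is to invoke the Nielsen uniqueness result just mentioned --- that each primitive element of $F(a,b)^{\mathrm{ab}} \cong \mathbb{Z}^2$ is the image of a unique conjugacy class of primitives in $F(a,b)$ --- together with an explicit construction of a positive-word representative in each conjugacy class.

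First I would abelianize, writing $\bar g = m[a] + n[b]$. Since $g$ is primitive, $(m,n)$ is primitive in $\mathbb{Z}^2$, so $\gcd(m,n) = 1$. Set $\varepsilon := \operatorname{sign}(m)$ and $\eta := \operatorname{sign}(n)$, with an arbitrary sign convention when $m$ or $n$ vanishes. The degenerate cases $m = 0$ or $n = 0$ are immediate: if $m = 0$ then $n = \pm 1$ and Nielsen's uniqueness forces $g$ to be conjugate to $b^{\pm 1}$, which is already a positive word in $b^{\eta}$; the case $n = 0$ is symmetric.

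For the generic case $m, n \neq 0$, after applying the automorphism $a \mapsto a^{\varepsilon}$, $b \mapsto b^{\eta}$ of $F(a,b)$ I may assume $\varepsilon = \eta = 1$ and $m, n > 0$. I would then construct, by induction on $m + n$, a primitive word $w \in F(a,b)$ that is positive in $a,b$ and has abelianization $m[a] + n[b]$. The base case $m = n = 1$ uses $w = ab$. In the inductive step, say $m > n$ (the case $n > m$ is symmetric, and $m = n$ forces $m = n = 1$ by coprimality), apply the induction hypothesis to the pair $(m-n, n)$, which is coprime with smaller sum, to obtain a primitive positive word $w'$ with abelianization $(m-n)[a] + n[b]$, and set $w := w'(a, ab)$. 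Since $a \mapsto a$, $b \mapsto ab$ is a Nielsen automorphism of $F(a,b)$, $w$ is again primitive; it is patently positive in $a, b$; and a direct computation on abelianizations gives $\bar w = (m-n)[a] + n([a]+[b]) = m[a] + n[b]$, as required.

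Finally, both $g$ and $w$ are primitive elements projecting to the same vector in $\mathbb{Z}^2$, so Nielsen's uniqueness yields that $g$ is conjugate to $w$, completing the proof. The only point requiring any thought is the inductive construction of the positive primitive word, which is essentially the Euclidean algorithm in disguise; everything else is a direct unpacking of Nielsen's uniqueness statement, so the argument really does go through ``without breaking a sweat'' as advertised.
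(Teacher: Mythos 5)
Your proof is correct and follows essentially the same route as the paper's: both rest on Nielsen's uniqueness of the conjugacy class of primitives lying over a given primitive vector in the abelianization, and both produce a positive primitive word with the prescribed abelianization by running the Euclidean algorithm through Nielsen transformations that introduce no inverses. Your explicit induction via the substitution $a\mapsto a$, $b\mapsto ab$ is just a more self-contained packaging of the paper's ``run the inverse sequence of Nielsen transformations starting from $a,b$'' step.
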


\begin{proof} As the proof in \cite{OZ} we rely on the fact that for any primitive element $z_1a+z_2b$  in the abelianization we have a unique conjugacy class of primitive elements in $F(a,b)$ that maps to $z_1a+z_2b$.  

\smallskip We assume that $g$ maps to $na+mb$ in the homology where $n,m\ge 0$, the other cases are analogous. We need to show that there exists a primitive element in $F(a,b)$ that can be written as a positive word in $a$ and $b$ that maps to $na+mb$.

\smallskip Choose $r,s\ge 0$ such that $na+mb$ and $ra+sb$ form a basis of the homology. It is easily checked that we can transform this basis  into the basis $a$, $b$ by only applying elementary Nielsen transformations of type (1) and of type (3) with $\varepsilon=-1$ such that all intermediate elements only have positive coefficients. We recover the original basis by running the inverse transformation  in inverse order, here all transformation are of type (1) or of type (3) with $\varepsilon=1$.

\smallskip Now we can run the same sequence of inverse Nielsen transformations
in $F(a,b)$ starting with $a$, $b$. We obtain a basis of $F(a,b)$ whose first
element maps to $na+nb$ in the homology. As no inverses are introduced in this
sequence of Nielsen transformations it follows that this first basis element
is a positive word in $a$ and $b$ and must be conjugate to $g$. This proves the claim.
\end{proof}



\section{Generating pairs of HNN-extensions}

In the following we assume that $\mathbb A$ is a graph of groups with underlying graph $A$. The vertex group of a vertex $v$ is denoted by $A_v$. It is well known that any tuple that  generates a non-free subgroup is Nielsen equivalent to a tuple containing an elliptic element, i.e. an element conjugate to an element of one of the vertex groups, see \cite{St}, \cite{Z}, \cite{PR} and \cite{W1} for various levels of generality. The tuple containing an elliptic element can be obtained from the original one by a sequence of length reducing Nielsen moves. If the underlying graph is not a tree, i.e. if ${\mathbb A}$ has an HNN-component then it is not possible that all generators are elliptic as they would all lie in the kernel of the projection to the fundamental group of the underlying graph.

\smallskip This justifies in the theorem below to only consider pairs of elements $(x,y)$ such that $x$ is elliptic and $y$ is hyperbolic. 

\begin{thm}\label{graphsofgroups} Let $\mathbb A$ be a graph of groups whose underlying graph $A$ is not a tree. Let $G=\pi_1(\mathbb A)$. Suppose that $(x,y)$ and $(x',y')$ are generating pairs of $G$  such that $x$ and $x'$ are elliptic. 

Then $(x,y)\sim(x',y')$ iff there exist $g\in G$, $k\in\mathbb Z$ and $\varepsilon,\eta\in\{-1,1\}$ such that $$x'=gx^\varepsilon g^{-1}\hbox{ and }y'=gy^\eta x^kg^{-1}.$$\end{thm}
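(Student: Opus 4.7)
The implication $(\Leftarrow)$ is straightforward. Inversions (moves of type $(2)$) and right-multiplications (type $(3)$) transform $(x,y)$ into $(x^\varepsilon,y^\eta x^k)$. To conjugate the result by $g$, write $g$ as a word in $x$ and $y$ (possible since $(x,y)$ generates $G$) and invoke the well-known fact that conjugating a generating tuple by any word in its entries can be realized by Nielsen moves.

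For $(\Rightarrow)$, use the reformulation of Nielsen equivalence stated before Proposition \ref{Nielsen}: there exist $\phi:F_2=F(a,b)\to G$ and $\alpha\in\mathrm{Aut}(F_2)$ with $\phi(a)=x$, $\phi(b)=y$, $\phi\alpha(a)=x'$, $\phi\alpha(b)=y'$. Write $u:=\alpha(a)$ and $v:=\alpha(b)$, so $\{u,v\}$ is a basis of $F_2$. The projection $\pi\colon G\to\pi_1(A)$ to the fundamental group of the underlying graph kills every vertex group, and hence every elliptic element. Since $(x,y)$ generates $G$ and $\pi(x)=1$, the image $\pi(y)$ generates $\pi_1(A)$; as the latter is a nontrivial (because $A$ is not a tree) singly generated free group, $\pi_1(A)\cong\mathbb Z$, with generator $t:=\pi(y)$. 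The composition $\pi\circ\phi$ factors through abelianization and sends $a\mapsto 0,\,b\mapsto 1$, so the abelianizations $\overline u=(p_u,q_u),\overline v=(p_v,q_v)\in\mathbb Z^2$ satisfy $q_u=\pi(x')=0$ and $q_v=\pi(y')=\pm 1$. Since $\det\overline\alpha=p_uq_v=\pm 1$ we conclude $p_u=\varepsilon\in\{\pm 1\}$ and $q_v=\eta\in\{\pm 1\}$. By Nielsen's uniqueness of the primitive conjugacy class with a given abelianization (cited in the proof of Proposition \ref{osbornezieschang}), $u$ is conjugate in $F_2$ to $a^\varepsilon$; write $u=wa^\varepsilon w^{-1}$ and set $g:=\phi(w)$, so that $x'=gx^\varepsilon g^{-1}$.

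The heart of the argument is the analysis of $v$. Set $v':=w^{-1}vw$; then $(a^\varepsilon,v')$, and hence $(a,v')$, is a basis of $F_2$. I claim every such $v'$ has the form $a^m b^\eta a^n$ for some $m,n\in\mathbb Z$. Equivalently, the orbit of $b$ under $\mathrm{Stab}_a(\mathrm{Aut}(F_2))$ equals $S:=\{a^m b^\eta a^n:m,n\in\mathbb Z,\eta\in\{\pm 1\}\}$. Using the classical isomorphism $\mathrm{Out}(F_2)\cong GL_2(\mathbb Z)$ and the fact that every matrix in the stabilizer of the first standard basis vector lifts to an automorphism of $F_2$ fixing $a$, one sees that $\mathrm{Stab}_a(\mathrm{Aut}(F_2))$ is generated by (i) $\tau\colon b\mapsto aba^{-1}$ (conjugation by $a$, which commutes with $a$), (ii) $\alpha_R\colon b\mapsto ba$, and (iii) $\iota\colon b\mapsto b^{-1}$. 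A direct case check shows each of these generators sends $S$ into $S$, while every element of $S$ is reached from $b$ by a suitable composition of them. This identifies the orbit with $S$ and yields $v'=a^m b^\eta a^n$.

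To finish, compute $y'=\phi(v)=g\phi(v')g^{-1}=g(x^m y^\eta x^n)g^{-1}$. Absorbing $x^m$ into the conjugator by setting $g':=gx^m$, and using that powers of $x$ commute with one another, one checks $g'x^\varepsilon g'^{-1}=gx^\varepsilon g^{-1}=x'$ and $g'y^\eta x^{m+n}g'^{-1}=g x^m y^\eta x^n g^{-1}=y'$; this gives the required form with $k:=m+n$. The main obstacle is the orbit description in the previous paragraph; everything else is essentially bookkeeping.
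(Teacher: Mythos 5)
Your proof is correct, and it follows the same overall skeleton as the paper's: pull the problem back to $F(a,b)$, use the epimorphism $\pi\colon G\to\pi_1(A)\cong\mathbb Z$ together with the fact that elliptic elements lie in $\ker\pi$ to pin down the primitive element mapping to $x'$, and then classify the elements completing $a^{\varepsilon}$ to a basis. The one genuine difference is how you show that this primitive $u$ is conjugate to $a^{\pm1}$: the paper invokes Proposition~\ref{osbornezieschang} to replace $u$ by a positive word $w$ in $a^{\varepsilon},b^{\eta}$ and then argues that any such positive word containing an occurrence of $b^{\eta}$ maps to a non-elliptic element (since $\pi(\psi(w))=\pi(y^{\eta})^N\neq 0$), forcing $w=a^{\varepsilon}$; you instead read off from $\pi(x')=0$ that the $b$-exponent sum of $u$ vanishes, use the determinant condition to get $a$-exponent sum $\pm1$, and then apply Nielsen's uniqueness of the primitive conjugacy class over a given abelianized primitive. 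Both routes ultimately rest on that same theorem of Nielsen (it is also what drives the proof of Proposition~\ref{osbornezieschang}), but yours bypasses the positive-word machinery entirely, which makes the argument for this theorem a bit more self-contained; the paper's positive-word route is the one that generalizes to the geometric setting of Theorem~\ref{main}, where the homological argument would not suffice. You also supply a proof (via the structure of the stabilizer of $a$ in $\mathrm{Aut}(F_2)$, using that the kernel of $\mathrm{Aut}(F_2)\to GL_2(\mathbb Z)$ is inner and that the centralizer of $a$ is $\langle a\rangle$) of the fact that $(a,v')$ a basis forces $v'=a^mb^{\eta}a^n$, which the paper declares immediate, and you spell out the easy direction that the paper omits; both additions are correct.
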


\begin{proof} 
For any graph of groups $\mathbb A$ with underlying graph $A$ there is a natural epimorphism $\pi:\pi_1(\mathbb A)\to\pi_1(A)$ whose kernel is generated by the elliptic elements. In our context $\pi_1(A)$ is generated by $\pi(y)$ since $x$ lies in the kernel of $\pi$. Thus $\pi_1(A)\cong \mathbb Z$ as $A$ is not a tree, in particular $\pi(y)$ is an element of infinite order.


Suppose now that $w$ is a positive word in $x^{\varepsilon}$ and $y^{\eta}$ for fixed
$\varepsilon,\eta\in\{-1,1\}$ with at least one occurence of $y^{\eta}$. Let $N$ be the number of occurences of $y^{\eta}$ in $w$. By assumption $N>0$ and as $w$ is a positive word we have $\pi(w)=\pi(y^\eta)^N$, in particular $w\notin\ker\pi$ and $w$ cannot be elliptic.

Let now $\psi:F(a,b)\to G$ be the epimorphism given by $a\mapsto x$ and $b\mapsto y$. If $(x,y)$ is Nielsen equivalent to $(x',y')$ then $x'=\psi(h)$ for some primitive element $h\in F(a,b)$. By Proposition~\ref{osbornezieschang} $h$ is conjugate to some element represented by a positive word $w$ in $a^\varepsilon$ and $b^\eta$ for some $\varepsilon,\eta\in\{-1,1\}$. As $\psi(w)$ is elliptic it follows from the above that $w$ is a positive word in $a^\varepsilon$. As $w$ represents a primitive element of $F(a,b)$ it follows that $w=a^\varepsilon$. This implies that $x'=gx^{\varepsilon}g^{-1}$ for some $g\in G$.

The second part is immediate. Indeed any element $h$ of $F(a,b)$ such that $(a^\varepsilon,h)$ forms a basis must be of type 
$h=a^nb^\eta a^m$ with $n,m\in\mathbb Z$, $\eta=\pm1$ and 
$(a^\varepsilon,a^nb^\eta a^m)$ is conjugate to 
$(a^\varepsilon,b^\eta a^{m+n})$.
\end{proof}

\section{Piecewise geodesics in hyperbolic space}

In this section we introduce piecewise geodesics and establish some basic
properties needed later on. Throughout this section all paths are paths in $\mathbb H^3$.

\medskip 
A $(N,\alpha)$-piecewise geodesic is a path that is composed of geodesic segments $[x_i,x_{i+1}]$ of length at least $N$ such that the angle $\theta_{i+1}\in[0,\pi]$ between $[x_i,x_{i+1}]$ and $[x_{i+1},x_{i+2}]$ at $x_{i+1}$ is at least $\alpha$.

	\begin{figure}[htb]
		\input{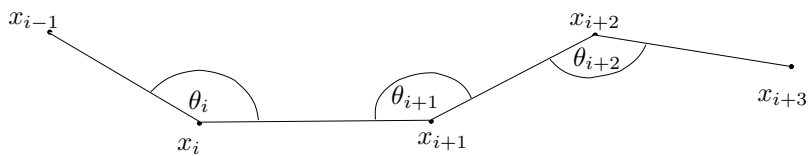}
		\caption{A section of a piecewise geodesic}
		\label{fig:pw-geodesic}
	\end{figure}

It is obvious that if $B\ge B'$ and $\alpha\ge \alpha'$ then any $(B,\alpha)$-piecewise geodesic is also a $(B',\alpha')$-piecewise geodesic.

\medskip We will need the following basic fact about piecewise geodesics; as its proof is entirely standard we merely sketch it. For definitions of quasigeodesics and local quasigeodesics and their basic properties used in the proof below the reader is referred to \cite{CDP}.

\begin{lem} \label{quasigeodesic}
 For any $\xi>0$ there exist $B_1>0$ and $\theta_0\in[0,\pi)$ such that if $B\ge B_1$ and $\alpha\in[\theta_0,\pi]$ then any bi-infinite $(B,\alpha)$-piecewise geodesic $\gamma$ is 
 $\xi$-Hausdorff-close to some geodesic $\beta$. Moreover $\gamma$ is a quasigeodesic with the same ends as~$\beta$.
 \end{lem}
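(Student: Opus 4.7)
The plan is a standard three-step argument: show that $\gamma$ is a local quasigeodesic with quasigeodesic constants tending to $(1,0)$ as $B\to\infty$ and $\alpha\to\pi$; upgrade to a global quasigeodesic via the local-to-global principle in the $\delta$-hyperbolic space $\mathbb H^3$; and invoke the Morse stability lemma to find a bi-infinite geodesic $\beta$ that is $\xi$-Hausdorff close to $\gamma$ and shares its endpoints on $\partial\mathbb H^3$.

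The key computation lies in the first step. Consider two consecutive segments $[x_{i-1},x_i]$ and $[x_i,x_{i+1}]$ of lengths $a,b\ge B$ meeting at angle $\theta\ge\alpha$. The hyperbolic law of cosines gives
$$\cosh d(x_{i-1},x_{i+1}) = \cosh a\cosh b-\sinh a\sinh b\cos\theta\ge \tfrac14(1-\cos\alpha)\,e^{a+b},$$
hence $d(x_{i-1},x_{i+1})\ge a+b-C(\alpha)$ for a constant $C(\alpha)$ with $C(\alpha)\to 0$ as $\alpha\to\pi$. Fix $L_0$ to be a modest multiple of $B$, so that any subpath of $\gamma$ of parametric length at most $L_0$ passes through a uniformly bounded number of vertices. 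Iterating the above estimate at each such vertex (and treating subpath endpoints interior to some segment as degenerate vertices of angle $\pi$) shows that $\gamma$ is an $L_0$-local $(1,\epsilon)$-quasigeodesic for any prescribed $\epsilon>0$, provided $B_1$ is large enough and $\theta_0$ close enough to $\pi$.

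The remaining two steps are invocations of standard machinery (see \cite{CDP}). The local-to-global theorem asserts that if $L_0$ exceeds a threshold $L^*(\delta,\lambda,\epsilon)$, then an $L_0$-local $(\lambda,\epsilon)$-quasigeodesic in a $\delta$-hyperbolic space is in fact a global $(\lambda',\epsilon')$-quasigeodesic, with $(\lambda',\epsilon')\to(1,0)$ as $(\lambda,\epsilon)\to(1,0)$. The Morse stability lemma then yields a unique bi-infinite geodesic $\beta$ at Hausdorff distance $H(\delta,\lambda',\epsilon')$ from $\gamma$ with the same endpoints on $\partial\mathbb H^3$, and $H\to 0$ as $(\lambda',\epsilon')\to(1,0)$. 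Choosing $B_1$ large and $\theta_0$ close to $\pi$ therefore ensures $H<\xi$, as required.

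The main obstacle is the geometric content of the local-to-global principle. The two-segment estimate does not iterate cleanly on its own, since after "skipping" a vertex the angle at the next vertex in the resulting triangulated subpath is no longer directly controlled; in Euclidean geometry the conclusion of the lemma is in fact false. The reason the argument goes through is precisely the negative curvature of $\mathbb H^3$: in a $\delta$-hyperbolic space, a path that stays close to straight on every scale up to $L_0$ is forced to stay close to a true bi-infinite geodesic globally, and this is exactly what the local-to-global theorem formalises.
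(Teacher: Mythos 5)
Your first half runs parallel to the paper's: establish that $\gamma$ is a local quasigeodesic via a two‑segment estimate (you use the hyperbolic law of cosines where the paper uses the thinness of triangles with one large angle), then apply the local-to-global principle from \cite{CDP}. Two small points there: your displayed inequality only yields $d(x_{i-1},x_{i+1})\ge a+b-\log\frac{4}{1-\cos\alpha}$, whose defect tends to $\log 2$, not to $0$, as $\alpha\to\pi$ (to get a defect tending to $0$ you need the sharper estimate $\cosh d\ge(1-\mu)\cosh(a+b)$ with $\mu=1+\cos\alpha$, followed by $\cosh^{-1}$ rather than the crude $e^{d}\ge\cosh d$); and the multi-vertex iteration you worry about is unnecessary — take the window length equal to $B$, so that any subpath of that length crosses at most one vertex and the two-segment estimate is all you need. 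Neither of these is fatal.

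The genuine gap is in the step that is the actual content of the lemma: getting the Hausdorff distance down to an \emph{arbitrary} prescribed $\xi$. You invoke two refinements of standard machinery — that the local-to-global theorem outputs constants $(\lambda',\epsilon')\to(1,0)$, and that the Morse stability constant $H(\delta,\lambda',\epsilon')\to 0$ as $(\lambda',\epsilon')\to(1,0)$ — and neither is the statement proved in \cite{CDP} or the other standard references. As usually stated and proved, the Morse lemma gives a constant bounded below by a multiple of $\delta$ uniformly for $(\lambda,\epsilon)$ near $(1,0)$, and the local-to-global theorem makes no claim that the additive constant $\epsilon'$ tends to $0$; since the lemma is applied in this paper with $\xi=\frac12$, which is smaller than $\delta$ for $\mathbb H^3$, you cannot get away with the off-the-shelf constants. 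The refined statements you need are in fact true in $\mathbb H^3$ (one can prove $H\to 0$ using the strict convexity of the distance to a geodesic and the exponential contraction of long geodesics with endpoints at bounded distance from $\beta$), but they require an argument, not a citation. The paper avoids this entirely: it uses the standard Morse lemma only to get a crude uniform bound $C$ on $d(\gamma,\beta)$, and then improves $C$ to $\xi$ by hand — the quadrilateral with vertices $x_i$, $x_{i+1}$, $p_\beta(x_{i+1})$, $p_\beta(x_i)$ has legs of length at most $C$ and a long base once $B$ is large, so the midpoints $m_i$ of the segments lie within $\frac{\xi}{2}$ of $\beta$; convexity of $d(\cdot,\beta)$ then puts $[m_i,m_{i+1}]$ within $\frac{\xi}{2}$ of $\beta$; and the large-angle condition keeps $[m_i,x_{i+1}]\cup[x_{i+1},m_{i+1}]$ within $\frac{\xi}{2}$ of $[m_i,m_{i+1}]$. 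You need either to supply proofs of the two refined statements you invoke, or to replace that step with a direct argument of this kind.
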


\begin{proof} For any geodesic $\beta $ and $x\in \mathbb H^3$ we
  denote  the nearest point projection of $x$ to $\beta$ by $p_{\beta}(x)$.

Note first that for any $\eta>0$ there exists some angle $\theta\in[0,\pi)$
such that for any geodesic triangle ABC in $\mathbb H^3$ whose angle
at A is greater or equal to $\theta$ the sides AB and AC lie in the
$\eta$-neighborhood of BC. This is most easily seen in the Poincare disk model
by choosing A to be the center. As the angle at this vertex tends to $\pi$ the
opposite side of the triangle comes arbitrarily close the center, independently
of the length of the sides of the triangle.

	\begin{figure}[htb]
		\input{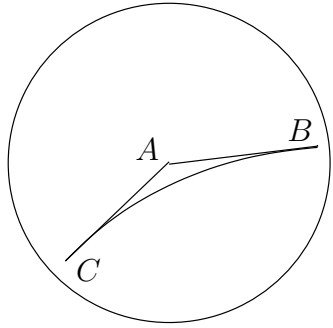}
		\caption{As the angle at A increases $d(A,BC)$ decreases}
		\label{fig:quadrilateral}
	\end{figure}

If $\theta_0$ is chosen such that the above holds for $\eta=\min(\frac{\xi}{2},\frac{1}{2})$ then it
is immediate that any $(B,\alpha)$-piecewise geodesic with $\alpha\ge\theta_0$ is a $(B,1,1)$-local
quasigeodesic. If moreover $B_0$ is chosen
sufficiently large then the local-to-global phenomenon for quasigeodesics implies that any
$(B,\alpha)$-piecewise geodesic with $B\ge B_0$ and $\alpha\ge\theta_0$ is a $(\lambda,c)$-quasigeodesic for
some fixed $\lambda\ge 1$ and $c\ge 0$.

As quasigeodics stay within bounded distance of geodesics this implies
that any $(B,\alpha)$-piecewise geodesic $\gamma$ with $B\ge B_0$ and $\alpha\ge\theta_0$  stays within bounded distance of the geodesic $\beta$ that has
the same ends in $\partial\mathbb H^3$ as $\gamma$.  This bound $C$ is uniform,
i.e. it only depends on $B_0$ and $\theta_0$.

Now observe that there exists $B_1\ge B_0$ such that for any $(B,\alpha)$-piecewise geodesic $\gamma$ with $B\ge B_1$ and $\alpha\ge\theta_0$ the midpoints $m_i$ of the segments
$[x_i,x_{i+1}]$ lie in the $\frac{\xi}{2}$-neighborhood of
$\beta$. This is true as
the quadrilaterals spanned by $x_i$, $x_{i+1}$, $p_{\beta}(x_{i+1})$ and
$p_{\beta}(x_i)$ can be assured to be arbitrarily thin provided that the
distance between $x_i$ and $x_{i+1}$ is sufficently large. This is true as
$d(x_i,p_{\beta}(x_i))$ and $d(x_{i+1},p_{\beta}(x_{i+1}))$ are bounded from
above by $C$, see Figure~\ref{fig:3}.

	\begin{figure}[htb]
		\input{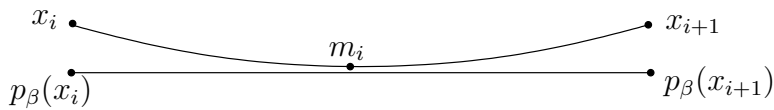}
		\caption{$m_i$ gets arbitrarily close to $\beta$ as $d(x_i,x_{i+1})$ increases}
		\label{fig:3}
	\end{figure}

Note further that the convexity of the distance function to the
geodesic $\beta $ then immediately implies that the geodesics $[m_i,m_{i+1}]$
also lie in the  $\frac{\xi}{2}$-neigborhood of $\beta$ for all $i$.

To conclude it clearly suffices to show that the piecewise geodesic
$[m_i,x_{i+1}]\cup [x_{i+1},m_{i+1}]$ lies in the
$\frac{\xi}{2}$-neighborhood of $[m_i,m_{i+1}]$. This however is true
by our choice of $\theta_0$. 

Thus we have shown that there exist $B_1>0$ and $\theta_0\in[0,\pi)$ such that
any  $(B,\alpha)$-piecewise geodesic $\gamma$ with $B\ge B_1$ and
$\alpha\ge\theta_0$ is a quasigeodesic that remains within distance
$\xi$ of the geodesic with the same ends.
\end{proof}

\section{Hyperbolic knot complements as limits of closed hyperbolic 3-manifolds}\label{secmichael}

It is a deep insight of Thurston that a cusped finite volume hyperbolic 3-manifold $M$ occurs as the geometric limit of closed hyperbolic manifolds which are topologically obtained from $M$ by Dehn fillings along increasingly complicated slopes. For definitions and details concerning algebraic and geometric convergence see 
\cite[Chapter~4]{M} and \cite[Chapter~7]{MT98}.

\smallskip Let now $M$ be the complement of a hyperbolic knot $\mathfrak k$ and let $m$ and $l$ denote the meridian/longitude 
pair for $\mathfrak k$. Let 
$\rho\co\pi_1(M)\to PSL(2,\mathbb C)$ denote the holonomy  of the complete hyperbolic structure. The image of $m$ and $l$ are   commuting parabolic elements and we can assume that their common fixed point is $\infty$ i.e.\ $Stab_\infty=\langle \rho(m),\rho(l) \rangle$ and that
\[ \rho(m) (z) = z+1 \text{ and } \rho(l)(z)= z+\tau_0\,.\]
The complex number $\tau_0\in\mathbb C\setminus\mathbb R$ is called the \emph{cusp parameter} of $\mathfrak k$.  

The deformation space of hyperbolic structures on $M$ can be holomorphically parametrised by a complex parameter $u$ in a neighborhood $U$ of $0\in\mathbb{C}$. Details about the deformation space and Thurston's hyperbolic Dehn filling theorem can be found in the Appendix~B of \cite{BP01}. See also \cite[Chapter~5]{Th}.

The following facts can be found in \cite[B.1.2]{BP01}:
there is 
an analytic family 
$\rho_u$, $u\in U$, of representations 
$\rho_u\co\pi_1 (M) \to PSL(2,\mathbb C)$ and an analytic function $v=v(u)$ such that $u$ and $v$ are the complex translation length of $\rho_u(m)$ and $\rho_u(l)$ respectively and $v(0)=0$. The function $\tau(u)=v(u)/u$ is analytic and $\tau(u) = \tau(0) + O(|u|^2)$ where $\tau(0)=\tau_0$ is the cusp parameter. 
 For $u\in U$ the generalized \emph{Dehn filling coefficient} of the cusp is
 the defined to be the element of $\mathbb{R}^2 \cup\infty\cong S^2$ defined by
\[ \begin{cases} \infty &\text{ if $u=0$}\\
(p,q)\  \hbox{s.t.}\  u p + v q = 2\pi i &\text{ if $u\neq0$.}
\end{cases}\]
By Thurston's Dehn filling theorem and the definition of $U$, the map from the points of $U$ to their generalized Dehn filling coefficients is a homeomorphism from $U$ to a neighborhood of $\infty$ in $\mathbb{R}^2 \cup \{ \infty\}$.

The representation $\rho_0$ is the holonomy of the complete hyperbolic structure of $M$. If $u\neq0$ then
the representation $\rho_u$ is the holonomy of a non complete hyperbolic structure $M_u$ on $M$ and the metric completiton of $M_u$ is described by the Dehn filling parameters 
(see \cite[B.1]{BP01}). We are only interested in the case that $p$ and $q$
are coprime integers. Then $\rho_u$ factors through $\pi_1 (M(p/q))$ 
and the metric completion of $M_u$ is homeomorphic to 
$M(p/q)$. Here and in the sequel $M(p/q)$ denotes the manifold obtained from $M$ by Dehn filling along the slope $p m+ q l$. By Mostow-Prasad rigidity the  faithful discrete representations of $\pi_1(M)$ and $\pi_1(M(p/q))$ in $PSL(2,\mathbb C)$ are unique up to conjugation.

For a large enough integer $n$, the coordinate pair $(1,n)$ must lie in the homeomorphic image of $U$ in $\mathbb{R}^2 \cup \{ \infty \}$ so there is a $u_n \in U$ with Dehn filling coefficient $(1,n)$.  Furthermore $u_n \to 0$ as $n \to \infty$.  Hence $v_n := v(u_n) \to 0$ and $\tau_n:=\tau(u_n) \to \tau_0$ as $n \to \infty$.  So, as described above, the metric completion of $M_{u_n}$ is the hyperbolic manifold $M_n:=M(1/n)$ and the holonomy for $M_n$ is determined by $\rho_n:=\rho_{u_n}$.    The manifold $M_n$ contains a new geodesic, the core of the filling torus, which is isotopic to the image of $l$ in $M_n$.  

On the peripheral subgroup $\langle m, l \rangle < \pi_1 (M)$ the representation is given by:


\[ \rho_n(m)(z)= e^{u_n} z +1 \text{ and } \rho_n(l)(z)= e^{v_n} z +\frac{e^{v_n}-1}{e^{u_n}-1}\]
(see \cite[B.1]{BP01}).
By conjugation of $\rho_n$ by the parabolic transformation
$A_n$ given by $$A_n(z) = z +\frac{\tau_n}{1-e^{v_n}} - \frac{1}{1-e^{u_n}}$$
we can assume that $\rho_n(m) = W_n$ and $\rho_n(l) = V_n$ where
\[ W_n(z)= e^{u_n} z +\tau_n\frac{e^{u_n}-1}{e^{v_n}-1} \text{ and } V_n(z)= e^{v_n} z +\tau_n\,.\]
Note that $A_n$ converges to $A$ given by $A(z)=z+ (\tau_0-1)/2$.
Note also that the Dehn Surgery Theorem (see \cite[Chapter~5]{Th} and
\cite{PePo}) implies that the sequence of groups 
$\{\rho_n(\pi_1(M))\}$ converges geometrically to $\rho(\pi_1(M))$.

A simple calculation shows that 
\[ V_n^k(z) = e^{kv_n} z +\frac{e^{kv_n}-1}{e^{v_n}-1}\tau_n\]
and hence $V_n^{-n} = W_n$ since $u_n+nv_n=2\pi i$ and therefore $e^{-nv_n}=e^{u_n}$.
Moreover the fixed points of the loxodromic transformation $V_n$ are $\infty$
and $\tau_n/(1-e^{v_n})$. The above facts allow us to conclude as in the
discussion in \cite[4.9]{M} that the following hold:
\begin{enumerate}
\item The elements  $\rho_n(l)$ are loxodromic isometries. The fixed points of $\rho_n(l)$ converge to the fixed point $\infty$ of the parabolic subgroup $P=\rho(\pi_1(\partial M))$.

\item $\rho_n(l)$ converges uniformly on compact sets to $ \rho(l)$ and 
$\rho_n(m)=\rho_n(l)^{-n}$ converges to $\rho(m)$.

\item Furthermore the sequence of subgroups 
$\langle \rho_n(l)\rangle$ generated by the core of the filling solid torus converges geometrically to the peripheral subgroup 
$P$.
\end{enumerate}

In the sequel we shall use the following convention: we shall identify $\pi_1(M)$ with the image $\rho(\pi_1(M))\subset PSL(2,\mathbb C)$ and for each $g\in \pi_1(M)$ we  write $g_n = \rho_n(g)$ for short.
We shall denote by $\gamma_n\subset M_n$ the new geodesic i.e.\ the core of the filling solid torus.

The following proposition gives some more information about the geometry of the limiting process. Note that the translation lengths of an element $g$ on some $g$-invariant subset $Y$ of $\mathbb H^3$ is defined to be $$|g|_Y:=\underset{y\in Y}{\inf}d_{\mathbb H^3}(y,gy).$$ In particular the translation length of a parabolic element $g$ on a $g$-invariant horosphere $S$ is measured with respect to the metric of $\mathbb H^3$ rather than the Euclidean path metric of $H$. We will need the following lemma, see \cite[Sec.~9, Lemma~2]{Mey87}).

\begin{lem}\label{mey} Let $g\in\hbox{Isom}(\mathbb H^3)$ be a loxodromic isometry with complex translation length $a+ib$ and let $N_r=N_r(A_g)$ be the $r$ neighborhood of the axis of $g$. Then $$\cosh(|g|_{\partial N_r})=\cosh(a)+\sinh^2(r)(\cosh(a)-\cos(b))=$$ $$\cosh(a)+\sinh^2(r)|\cosh(a+ib)-1|.$$
\end{lem}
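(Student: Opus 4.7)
The plan is to compute $d_{\mathbb H^3}(p,gp)$ explicitly for a convenient point $p\in\partial N_r$ in the upper half-space model, and then verify that the two algebraic forms of the right-hand side agree. I would normalize so that $A_g$ is the vertical geodesic above the origin; the loxodromic $g$ then acts in cylindrical coordinates $(\rho,\theta,t)$ by
$$(\rho,\theta,t)\longmapsto (e^{a}\rho,\,\theta+b,\,e^{a}t),$$
that is, by the dilation $z\mapsto e^a z$ (translation by $a$ along the axis) composed with rotation by angle $b$ about the axis. Since $g$ commutes with the rotations about $A_g$ and with the dilations along $A_g$, and $\partial N_r=\{\rho/t=\sinh(r)\}$ is invariant under both, the function $p\mapsto d_{\mathbb H^3}(p,gp)$ is constant on $\partial N_r$ and therefore realizes $|g|_{\partial N_r}$ at every such point.

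Next I would take $p=(\sinh(r)\,t,0,t)$ and apply the standard upper half-space formula
$$\cosh d(p_1,p_2)=1+\frac{(x_1-x_2)^2+(y_1-y_2)^2+(t_1-t_2)^2}{2\,t_1 t_2}.$$
Expanding gives a numerator $(\rho^2+t^2)(1+e^{2a})-2e^a(\rho^2\cos b+t^2)$, and dividing by $2e^at^2$ and using $(1+e^{2a})/(2e^a)=\cosh(a)$, $\rho^2/t^2=\sinh^2(r)$, $1+\sinh^2(r)=\cosh^2(r)$ collapses the expression to
$$\cosh(|g|_{\partial N_r})=\cosh^2(r)\cosh(a)-\sinh^2(r)\cos(b)=\cosh(a)+\sinh^2(r)\bigl(\cosh(a)-\cos(b)\bigr),$$
which is the first asserted equality.

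The second equality $\cosh(a)-\cos(b)=|\cosh(a+ib)-1|$ is a purely algebraic check. Using $\cosh(a+ib)=\cosh(a)\cos(b)+i\sinh(a)\sin(b)$, one obtains
$$|\cosh(a+ib)-1|^2=(\cosh(a)\cos(b)-1)^2+\sinh^2(a)\sin^2(b),$$
and substituting $\sinh^2(a)=\cosh^2(a)-1$ and $\sin^2(b)=1-\cos^2(b)$ collapses this to $(\cosh(a)-\cos(b))^2$; both quantities are non-negative since $\cosh(a)\ge 1\ge\cos(b)$, so taking square roots concludes the argument. There is no conceptual obstacle: the only point requiring care is bookkeeping of the cylindrical coordinate action and the identification $\rho/t=\sinh(r)$ for points on $\partial N_r$, both of which are standard.
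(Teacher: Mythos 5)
Your computation is correct and complete. Note that the paper does not actually prove this lemma at all: it is quoted directly from Meyerhoff \cite[Sec.~9, Lemma~2]{Mey87}, and the only thing the authors verify themselves is the algebraic identity $|\cosh(a+ib)-1|=\cosh(a)-\cos(b)$ linking the two displayed forms (their equation (1)), which you also check correctly. Your argument therefore supplies a self-contained proof where the paper defers to a reference. The key steps all hold up: the normalization of $A_g$ as the vertical geodesic with $g$ acting by $(\rho,\theta,t)\mapsto(e^a\rho,\theta+b,e^at)$; the identification $\partial N_r=\{\rho/t=\sinh(r)\}$ (which follows from $\sinh(d(\cdot,A_g))=\rho/t$); the observation that $d(p,gp)$ is constant on $\partial N_r$ because rotations about and translations along the axis commute with $g$ and act transitively on $\partial N_r$, so the infimum in the definition of $|g|_{\partial N_r}$ is attained at your chosen point; and the expansion of the half-space distance formula, which does collapse to $\cosh^2(r)\cosh(a)-\sinh^2(r)\cos(b)=\cosh(a)+\sinh^2(r)(\cosh(a)-\cos(b))$. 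The final square-root step is justified by $\cosh(a)\ge 1\ge\cos(b)$ as you say. One could quibble that you should state explicitly that $a>0$ for a loxodromic element (so the formulas are nondegenerate), but nothing in the argument depends on it.
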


Note that the two formulae in Lemma~\ref{mey} are equivalent via the identity \begin{equation}\label{eq:identity}|\cosh(a+ib)-1|=\cosh(a)-\cos(b).\end{equation}

\begin{prop}\label{michael}
For any horoball $H$ at $\infty$ there exists  a sequence 
$(r_n)_{n\in\mathbb N}$ of real numbers such that the following hold where $N_n:=N_{r_n}(A_{l_n})$ is the $r_n$-neighborhood of the axis $A_{l_n}$ of $l_n$ in $\mathbb H^3$.
\begin{enumerate}
\item For any fixed $k,N$ we have $\lim_{n\to\infty}\left(|l_n^{N-k\cdot n}|_{\partial N_n}\right)=|l^N m^k|_{\partial H}$.

\item If $N\in\mathbb N$  such that $|l^Nm^k|_{\partial H}>C>0$ for all $k\in \mathbb Z$, then for any $\eta>0$ there exists some $n'$ such that  $|l_n^{N-k\cdot n}|_{\partial N_n}\ge C -\eta$ for all $n\geq n'$ and all 
$k\in\mathbb Z$.
\end{enumerate}

Moreover if $(g_n)$ is a sequence of elements with $g_n\in\rho_n(\pi_1(M_n))$ that converges to a hyperbolic element $g\in \rho(\pi_1(M))$ and $C>0$ then $H$ can be chosen such that the following hold:
\begin{enumerate}\alph{enumi}
\item[(a)]\label{(a)} For sufficiently large $n$  we have
  $d(N_n,h_nN_n)\ge C$  for all 
  $h_n\in \rho_n(\pi_1(M_n))-\langle l_n\rangle$.
\item[(b)]\label{(b)} For sufficiently large $n$ the $C$-neighborhood of 
the geodesic segment $[x_n,y_n]$ between $N_n$ and $g_n N_n$ does not intersect any translates of $N_n$ except $N_n$ and $g_n N_n$.
\end{enumerate}
\end{prop}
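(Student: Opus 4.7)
The key observation is that $l_n^{N-kn}$ is a power of $l_n$, so its axis is $A_{l_n}$ and the relevant tube neighborhood is precisely $N_n$. Its complex translation length is $(N-kn)v_n$, which by the relation $u_n+nv_n=2\pi i$ is congruent modulo $2\pi i$ to $w_n^{(k)} := Nv_n+ku_n = v_n(N+k/\tau_n)$. Lemma~\ref{mey} therefore gives
\[ \cosh|l_n^{N-kn}|_{\partial N_n} = \cosh(\mathrm{Re}\,w_n^{(k)}) + \sinh^2(r_n)\bigl|\cosh(w_n^{(k)})-1\bigr|.\]
On the other hand, if $\partial H$ has Euclidean height $t_0$, then $l^Nm^k$ acts on $\partial H$ by the Euclidean translation $z\mapsto z+N\tau_0+k$, giving $\cosh|l^Nm^k|_{\partial H} = 1+|N\tau_0+k|^2/(2t_0^2)$. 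Using $u_n,v_n\to 0$ and $\cosh(w)-1 = w^2/2+O(w^4)$, the two sides match provided $\sinh(r_n)|v_n|\to|\tau_0|/t_0$, so I define $r_n := \sinh^{-1}\!\bigl(|\tau_0|/(t_0|v_n|)\bigr)$. Note $r_n\to\infty$, reflecting that the axes $A_{l_n}$ recede to~$\infty$.

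\textbf{Plan for (2).} Both $\cosh|l^Nm^k|_{\partial H}$ and $\cosh|l_n^{N-kn}|_{\partial N_n}$ grow quadratically in $k$ with comparable leading coefficients, so there is a single $K=K(C,N,\tau_0)$ such that both quantities exceed $\cosh(C)+1$ whenever $|k|>K$ and $n$ is large. The desired bound is then automatic in that range, while on the finite set $|k|\le K$ the pointwise convergence furnished by (1) is uniform, giving the estimate $|l_n^{N-kn}|_{\partial N_n}\ge C-\eta$.

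\textbf{Plan for (a) and (b), and main obstacle.} Here I exploit the geometric convergence of $\Gamma_n := \rho_n(\pi_1(M_n))$ to $\Gamma := \rho(\pi_1(M))$ together with standard cusp geometry. Shrink $H$ (enlarge $t_0$) so that $d_{\mathbb H^3}(H,hH)>3C$ for every $h\in\Gamma-\mathrm{Stab}_\infty(\Gamma)$; since $g$ is loxodromic it is not in $\mathrm{Stab}_\infty$, and a further shrinking arranges that the $3C$-neighborhood of the geodesic $[x,y]$ between $\partial H$ and $g\partial H$ meets no other $\Gamma$-translate of $H$. If (a) failed there would exist $h_n\in\Gamma_n-\langle l_n\rangle$ with $d(N_n,h_nN_n)<C$; the main technical step is then to extract a convergent subsequence $h_n\to h\in\Gamma$ whose limit satisfies $d(H,hH)\le 2C$ and $h\notin\mathrm{Stab}_\infty$, contradicting the choice of $H$. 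Part (b) then follows by the same compactness argument applied to the segment $[x_n,y_n]$, which converges to $[x,y]$ since $g_n\to g$ and $\partial N_n$ approximates $\partial H$ on compact sets. The delicate point, and the expected main difficulty, is that $N_n$ itself is non-compact, so extraction of a limit for $h_n$ does not follow directly from Chabauty convergence. The fix is to restrict attention to $\partial N_n\cap K$ for a fixed large ball $K\subset\mathbb H^3$ and use that off the horoball/tube neighborhoods the $\Gamma_n$-action is uniformly bilipschitz close to the $\Gamma$-action, ensuring that only finitely many translates of $H$ meet $K$ and that any sequence with bounded displacement on $K$ subconverges.
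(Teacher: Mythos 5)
Your treatment of (1) and (2) follows essentially the paper's route: the same normalization of $r_n$ (the paper defines $r_n$ by $\sinh^2(r_n)=\bigl(\cosh(|l|_{\partial H})-\cosh(\Re(v_n))\bigr)/|\cosh(v_n)-1|$, which is asymptotically your $\sinh(r_n)\,|v_n|=|\tau_0|/t_0$), the same congruence $(N-kn)v_n\equiv Nv_n+ku_n \bmod 2\pi i$ fed into Lemma~\ref{mey}, and for (2) the same split into a uniform bound for $|k|$ large plus pointwise convergence on the finitely many remaining $k$. One caution on (2): for $|k|$ large and $n$ fixed the argument $Nv_n+ku_n$ is not small, so the Taylor expansion you invoke in (1) does not by itself deliver ``quadratic growth with comparable leading coefficients''; the paper instead bounds $\cosh(|l_n^{N-kn}|_{\partial N_n})$ below by $\tfrac12\bigl(\Re(Nv_n+ku_n)\bigr)^2\cosh^2(r_n)$ and uses that $|\Re(u_n)|/|v_n|$ stays bounded away from $0$ while $|v_n|\cosh(r_n)\to|\tau_0|/t_0$. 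That is a fixable omission. Your plan for (b) also matches the paper's: compactness of the $(C+1)$-neighborhood of $[x,y]$ plus geometric convergence of the translates of $N_n$ to those of $H$.

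The genuine gap is in (a). You correctly identify the obstacle --- $N_n$ is non-compact and $r_n\to\infty$, so a pair of points realizing $d(N_n,h_nN_n)<C$ need not stay in any fixed compact set even after normalizing by the $\langle l_n\rangle$-action (a fundamental domain for $\langle l_n\rangle$ on $N_n$ has diameter of order $2r_n$) --- but your proposed fix does not close it. Restricting to $\partial N_n\cap K$ simply discards the offending configurations, and the claim that ``off the tube neighborhoods the action is uniformly bilipschitz close'' is not something Chabauty/geometric convergence provides; made precise and uniform it is essentially the Margulis lemma, which is the missing tool. A second difficulty for your contradiction scheme: a limit $h$ of elements $h_n\in\rho_n(\pi_1(M_n))\setminus\langle l_n\rangle$ may well be parabolic, i.e.\ lie in $\mathrm{Stab}_\infty$, in which case $d(H,hH)=0$ and no contradiction results; excluding this again requires a Margulis-type statement that elements moving points of the tube a short distance must lie in $\langle l_n\rangle$. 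The paper sidesteps all of this by choosing $\varepsilon=|l|_{\partial H}$ small enough that, by Meyerhoff's results, $N_n=\{p\in\mathbb H^3 \mid d(p,l_n p)\le\varepsilon\}$ is precisely invariant under $\langle l_n\rangle$, so $h_nN_n\cap N_n=\emptyset$ for all $h_n\notin\langle l_n\rangle$ uniformly in $n$, and a further decrease of $\varepsilon$ yields the definite separation $d(N_n,h_nN_n)\ge C$. You should replace the limit-extraction argument for (a) by this Margulis/Meyerhoff input.
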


\begin{proof}
Let $H = \{ (x,t)\in \mathbb C\times\mathbb R^+= \mathbb H^3 \mid t\geq t_0\}$ be a closed horoball in the upper half space model of the $\mathbb H^3$.
For a complex number $z$ we will denote by $\Re(z)$ and $\Im(z)$ the real and the imaginary part of $z$.

The translation length $|l|_{\partial H}$ is given by
\begin{equation}\label{eq:horo}\cosh(|l|_{\partial H}) = 1 +\frac{ |\tau_0|^2}{2t^2_0}\;.\end{equation}
We now define $r_n := 0$ if $|\Re(v_n)| \ge |l|_{\partial H}$.
If $|\Re(v_n)| < |l|_{\partial H}$ then we define  $r_n$ to be the unique positive real number satisfying 
\begin{equation}\label{eq:rn}
 \sinh^2(r_n)=
\frac { \cosh(|l|_{\partial H}) -\cosh( \Re(v_n) )}{|\cosh(v_n) -1 |}\,.
\end{equation} 
Note that since $v_n \to 0$ as $n \to \infty$ and $|l|_{\partial H}$ is a fixed positive number, eventually equation~\eqref{eq:rn} always holds and $r_n \to \infty$.
This definition implies that 
$|l_n|_{\partial N_n} = |l|_{\partial H}$ if $|\Re(v_n)| < |l|_{\partial H}$. Indeed this follows from Lemma~\ref{mey} and the fact that $v_n$  is the complex translation length of $l_n$.


Fix now $N$ and $k$. The complex translation length of $l_n^{N-kn}$
is $(N-kn)v_n$ and the equation $u_n + n v_n = 2\pi i $ implies
\[ (N-kn)v_n \equiv N v_n + k u_n \mod 2\pi i \,. \]
Hence, by Lemma~\ref{mey}, the translation length $|l_n^{N-kn}|_{\partial N_n}$ is given by

\begin{align}\label{eq:Translength}
\cosh(|l_n^{N-kn}|_{\partial N_n}) &=  \cosh( \Re(N v_n + k u_n) )  \\ \notag
& \quad + \sinh^2(r_n) \cdot | \cosh(N v_n + k u_n) -1|\,.
\end{align}

Now it follows from the low order asymptotics of the hyperbolic cosine and its Taylor expansion that
\[ \frac{| \cosh(N v_n + k u_n) -1| } { | \cosh(v_n) -1|} =
\frac{ | N \tau_n + k |^2}{|\tau_n|^2} \big(1+O(|v_n|^2 )\big)\,.\]
Moreover equation~(\ref{eq:rn}) implies
\[\lim_{n\to\infty}| \cosh(v_n) -1|\cdot \sinh^2(r_n) = \cosh(|l|_{\partial H})-1\,.\]
The last two equations together with equation~(\ref{eq:Translength})
imply:
\begin{align*}
\cosh(|l_n^{N-kn}|_{\partial N_n})
&= \cosh( \Re(N v_n + k u_n) )  \\ \notag
& \quad + \sinh^2(r_n) \cdot | \cosh(v_n) -1|
\frac{| \cosh(N v_n + k u_n) -1|}{| \cosh(v_n) -1|}\\
&\stackrel{n\to\infty}{\longrightarrow}
1 + (\cosh(|l|_{\partial H})-1)\frac{|N \tau_0 + k|^2}{|\tau_0|^2}\\
&= 1 + \frac{ | N \tau_0 + k |^2}{2t^2_0} \\
&= 
\cosh(|l^{N}m^k|_{\partial H})\;.
\end{align*}
This proves the first point.

In order to prove the second point we will make use of the following limits
\begin{equation}\label{eq:lim}
\lim_{n\to\infty}\frac{|\Re(v_n)|}{|v_n|} = 0 \text{ and }
\lim_{n\to\infty} \frac{|\Re(u_n)|}{|v_n|} = \frac{|\Im(\tau_0)|}{|\tau_0|^2}>0\,.
\end{equation}
and 
\begin{equation}\lim_{n\to\infty} |v_n|  \cosh(r_n) = \frac{|\tau_0|}{t_0} \ \end{equation} 
The first two follow easily using the facts $u_n+nv_n=2\pi i$, $v_n=u_n \tau_n$, and $\lim_{n \to \infty} \tau_n = \tau_0$.  To verify the second one observe first that ~(\ref{eq:rn}), the identity $\sinh^2(z)=\cosh^2(z)-1$ and multiplication by $|v_n|^2$ imply that
$$ |v_n|^2\cosh^2(r_n)=|v_n|^2\frac{\cosh(|l|_{\partial H})-\cosh(\Re(v_n))+|\cosh(v_n)-1|}{|\cosh(v_n)-1|}$$
applying (\ref{eq:identity}) and (\ref{eq:horo}) then gives 
\begin{align*} 
|v_n|^2\cosh^2(r_n) &=|v_n|^2\frac{\cosh(|l|_{\partial H})-\cos(\Im(v_n))}{|\cosh(v_n)-1|}\\ &=|v_n|^2\frac{1+\frac{|\tau_0|^2}{2t_0^2}-\cos(\Im(v_n))}{|\cosh(v_n)-1|}.\end{align*}

Now as $\lim\limits_{z\to 0}\frac{|z|^2}{|\cosh(z)-1|}=2$ this implies that $\lim\limits_{n\to\infty} |v_n|^2\cosh^2(r_n)=\frac{|\tau_0|^2}{t_0^2}$ which clearly proves the claim.

Suppose now that $N\in\mathbb N$ satisfies 
$|l^Nm^k|_{\partial H}>C>0$ for all $k\in \mathbb Z$ and let 
$\eta> 0$ be given. 
We choose $n''=n''(N,C)$ such that for all
$n\geq n''$ the following holds:
\[ \bigg| N \frac{\Re(v_n)}{|v_n|}\bigg| < 1,\quad
 |v_n|\cdot \cosh(r_n)  \geq \frac{|\tau_0|}{2t_0}\]
and
\[  \frac{|\Re(u_n)|}{|v_n|} > \frac 1 2 \frac{|\Im(\tau_0)|}{|\tau_0|^2}\,.\]

In order to prove the second point we start again with formula~(\ref{eq:Translength}):
\begin{align*}
\cosh(|l_n^{N-kn}|_{\partial N_n}) 
&= \cosh\big( \Re(N v_n + k u_n) \big)\cosh^2(r_n)\\ &\qquad  -   
\cos\big(\Im(N v_n + k u_n)\big) \sinh^2(r_n)\\
&> \big( \cosh( \Re(N v_n + k u_n) ) - 1\big)\cosh^2(r_n)\\
&\geq \frac 1 2 \big(\Re(N v_n + k u_n)\big)^2\cosh^2(r_n)\\
&= \frac 1 2 \big(\Re(N \frac{v_n}{|v_n|} + k \frac{u_n}{|v_n|})\big)^2 
(|v_n|\cosh(r_n))^2
\intertext{and for $n\geq n''$ we obtain }
\cosh(|l_n^{N-kn}|_{\partial N_n})&>  \frac{|\tau_0|^2}{8t^2_0} \cdot \big| |k \frac{\Re(u_n)}{|v_n|}| - | N \frac{\Re(v_n)}{|v_n|} | \big|^2. 
\end{align*}
Hence there exists a constant $C'$ such that $|k|\geq C'$ implies
\[|l_n^{N-kn}|_{\partial N_n}\geq C\,.\]
Since there are only finitely many $k\in\mathbb Z$ such that
$|k|< C'$ it follows from the first part of the
proposition that we can find $n'''$ such that for all $n\geq n'''$ and all
$|k|< C'$ the equation
\[ \big| l_n^{N-kn} \big|_{\partial N_n} \geq C-\eta\]
holds. The second point follows for $n'=\max(n'',n''')$.

In order to prove (a) and (b) we  shall use some results of Meyerhoff.
By \cite[Sec.~3\&9]{Mey87} and the definition of $r_n$, we know that if we choose $H$ such that $\varepsilon=|l|_{\partial H}$ is sufficiently small then the sets $$N_n=N_{r_n}(A_{l_n})=\{p \in \mathbb H^3 \mid d_{\mathbb H^3}(p, l_n(p) )\leq \varepsilon\}$$
have the property that $h_n(N_n)\cap N_n = \emptyset$ for all $h_n \in \rho_n(\pi_1(M_n))-\langle l_n \rangle$

By further reducing $\varepsilon$ we can further assume that $d(N_n,h_nN_n)\ge C$  for all $h_n \in \rho_n(\pi_1(M_n))-\langle l_n \rangle$  as the radii $r_n$ decrease uniformly as $\varepsilon$ decreases as follows from (\ref{eq:rn}). This proves (a).


We now put $\tilde H=GH$ and $\tilde N_n=G_nN_n$ where
$G=\rho(\pi_1(M))$ and $G_n=\rho_n(\pi_1(M))$. It now follows from the geometric convergence of $(G_n)$ to $G$ that for all 
$\eta>0$ and compact $K\subset\mathbb H^3$ there exists $n'$ such that for all $n\ge n'$  
$$d_H(K\cap \tilde H,K\cap \tilde N_n)\le \eta.$$ 
Here $d_H$ denotes the Hausdorff distance. Moreover for any $h\in G$  and any sequence $h_n\to h$ we have $d_H(K\cap hH,K\cap h_nN_n)\le \eta$ for $n$ sufficiently large.

Let $[x,y]$ denote the geodesic segment between $H$ and $gH$ and let  $[x_n,y_n]$ denote  the geodesic segment between $N_n$ and $g_n N_n$. 

Note that there can be only finitely many translates
$h H$, $h\in\pi_1(M)$, $h\not\in P\cup gP$, such that the intersection of $h H$ and the $(C+1)$-neighborhood of $[x,y]$ is non-empty. This follows from the fact that the $(C+1)$-neighborhood of $[x,y]$ is compact and that the translates of $H$ are disjoint. Thus after decreasing $\varepsilon$ we can assume that the $(C+1)$-neighborhood of $[x,y]$ does not intersect $hH$ for $h\not\in P\cup gP$. 

Let now $K$ be the $(C+1)$-neighborhood of $[x,y]$.
The above remark applies to $K$. Hence we know that for sufficiently large $n$ the Hausdorff distances between $H\cap K$ and $N_n\cap K$ and between $K\cap gH$ and $K\cap g_nN_n$ are arbitrarily small. This implies that the segments $[x_n,y_n]$ converge to $[x,y]$ thus $d_H([x,y],[x_n,y_n])\le\frac{1}{2}$ for large $n$. We also see that the $(C+\frac{1}{2})$-neighborhood of $[x,y]$ does not meet any translates of $N_n$ besides $N_n$ and $g_nN_n$ for large $n$. Thus we have shown that the $C$-neighborhood of $[x_n,y_n]$ does not meet any translates of $N_n$ besides $N_n$ and $g_nN_n$. This proves (b).\end{proof}

\begin{lem}\label{lemmawinkel} For any $\beta\in(0,\pi/2)$ there exist $\kappa(\beta)$ and $r(\beta)$ such that if  $\gamma$ is a geodesic in $\mathbb H^3$, $r\ge r(\beta)$ and $x,y\in\partial(N_r(\gamma))$ such that $d(x,y)\ge \kappa(\beta)$ then the angles enclosed by the geodesic segment between $x$ and $y$ and 
$\partial(N_r(\gamma))$ are at least $\beta$.
\end{lem}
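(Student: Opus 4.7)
\bigskip

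\noindent\emph{Proof plan for Lemma~\ref{lemmawinkel}.}
The plan is to show the quantitative estimate
\[ \sin\alpha \;\ge\; \tanh(r)\,\tanh(L/2), \]
where $\alpha$ is the angle at $x$ between the chord $[x,y]$ and the equidistant surface $\Sigma_r=\partial N_r(\gamma)$ and $L=d_{\mathbb H^3}(x,y)$; the same bound will then hold at $y$ by symmetry, and the lemma follows by choosing $r(\beta)$ and $\kappa(\beta)$ large enough that $\tanh(r(\beta))\tanh(\kappa(\beta)/2)\ge\sin\beta$, e.g.\ $r(\beta)=\kappa(\beta)/2=\operatorname{arctanh}\!\sqrt{\sin\beta}$.

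To obtain the estimate I first set up the upper half-space model with $\gamma$ the vertical axis. Then $\Sigma_r$ is the Euclidean cone $|z|=\sinh(r)\,t$, and after a rotation about $\gamma$ and a parabolic isometry fixing $\infty$ I may take
\[ x=(\sinh r,\;0,\;1), \qquad y=(s\sinh r\cos\theta,\;s\sinh r\sin\theta,\;s) \]
for parameters $s>0$ and $\theta\in[0,2\pi)$. The geodesic $[x,y]$ is the Euclidean semicircle through $x$ and $y$ with center on $\{t=0\}$ lying in the vertical half-plane spanned by the horizontal direction from $(\sinh r,0)$ to the horizontal projection of $y$ and by the vertical direction. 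The unit outward normal to $\Sigma_r$ at $x$ is $n=(1,0,-\sinh r)/\cosh r$.

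A direct but short calculation of the center and radius of this semicircle and of the tangent vector to $[x,y]$ at $x$, together with the clean cancellation
\[ (s\cos\theta-1) - \xi_c\sinh r\cdot N \;=\; -\tfrac12\cosh^2(r)\,N^2 \]
(where $N^2=s^2-2s\cos\theta+1$ and $\xi_c$ is the center's position on the line in $\{t=0\}$ joining the horizontal projections of $x$ and $y$) yields the closed form
\[ \sin\alpha \;=\; \frac{\sinh(r)\cosh(r)\,N^2}{2s\sinh(L)}. \]
Combining this with the standard formula $\sinh^2(L/2)=\bigl(\sinh^2(r)N^2+(s-1)^2\bigr)/(4s)$ and $\sinh L=2\sinh(L/2)\cosh(L/2)$, the desired inequality $\sin\alpha\ge\tanh(r)\tanh(L/2)$ reduces after cross-multiplication to
\[ \cosh^2(r)\,N^2 \;\ge\; \sinh^2(r)\,N^2+(s-1)^2, \]
i.e.\ $N^2\ge(s-1)^2$, which is just $2s(1-\cos\theta)\ge 0$.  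Equality holds exactly when $\theta=0$, i.e.\ when $x$ and $y$ lie on a common generator of the tube; this is the extremal ``coplanar'' configuration.

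The main obstacle is organizing the coordinate computation in Step~2 so that the algebra collapses to the elementary inequality above. The temptation is to attack the estimate via the distance function $f(t)=d(c(t),\gamma)$ and an ODE comparison, but the natural comparison $(\cosh f)''\le\cosh f$ goes in the wrong direction for bounding $|f'(0)|$ from below; this is why I prefer the direct Euclidean calculation outlined above.
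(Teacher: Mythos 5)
Your proposal is correct and is essentially the paper's own argument: the paper likewise derives the exact identity $\sin\beta(y)=\tanh(r)\,\frac{\cosh(d(x,y))-\cos\phi}{\sinh(d(x,y))}$, observes the minimum occurs in the coplanar case $\phi=0$ giving $\sin\beta(y)\ge\tanh(r)\tanh(d(x,y)/2)$, and then picks $r(\beta)$ and $\kappa(\beta)$ so that the product of the two hyperbolic tangents is at least $\sin\beta$. The only difference is cosmetic: the paper carries out the computation in the Klein hyperboloid model with the Minkowski inner product (following Gabai--Meyerhoff--Milley), whereas you use the upper half-space model and Euclidean semicircles, arriving at the identical closed form since $\frac{N^2}{2s}=\cosh(\delta)-\cos(\theta)$.
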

\begin{proof}
The proof of the lemma is by calculation, we follow the setup of \cite[Section~2]{GMM}. We perform all calculations in the Klein hyperboloid model of $\mathbb H^3$. In this model, $\mathbb H^3$ is the component of the hypersurface
\[ 
\{(x_0,x_1,x_2,x_3)\in\mathbb R^4\mid -x_0^2+x_1^2+x_2^2+x_3^2 = -1\}
\] with $x_0>0$. For $x=(x_0,x_1,x_2,x_3)$ and $y=(y_0,y_1,y_2,y_3)$ we will denote by $\langle x,y\rangle$ the Minkowski inner product
\[ \langle x,y\rangle = - x_0y_0 + x_1 y_1 + x_2 y_2 +x_3 y_3\,.\]
Recall that for each $x\in\mathbb H^3$ the restriction of the 
Minkowski inner product on the tangent space 
$T_{x} \mathbb H^3$ is positive defined. In the sequel we will use this metric on the tangent space. 

We assume that $\gamma$ is the intersection of $\mathbb H^3$ with the plane $\{ x_1=x_3=0\}$. Let $g$ be the loxodromic motion along $\gamma$ with complex length $\delta+i\phi$. The isometry $g$ is represented by the matrix 
\[M_g:=
\begin{pmatrix}
\cosh(\delta) & 0 & \sinh(\delta) &0\\
0 & \cos(\phi) & 0 & -\sin(\phi) \\
\sinh(\delta) & 0 & \cosh(\delta) &0\\
0 & \sin(\phi) & 0 & \cos(\phi)
\end{pmatrix}\,.
\]

Let $r>0$. Fix the point $x=(\cosh(r),\sinh(r),0,0)$. Let  
$\overrightarrow{n}\in T_{x} \mathbb H^3$
be the unit length inward normal vector to $\partial(N_r(\gamma))$. 
Thus $$\overrightarrow{n}=(-\sinh(r), -\cosh(r),0,0).$$
For every $y\in\mathbb H^3$ the unit vector $\overrightarrow{m}_y\in T_x \mathbb H^3$ pointing into the direction of $y$ is given by
\[ \overrightarrow m_y =\frac{y+ \langle y,x \rangle x }
{(\langle y,x \rangle^2 -1)^{1/2} }\,.\]

Now suppose that $y \in \partial (N_r(\gamma))$.  Hence there is some $\delta\in \mathbb{R}$ and $\phi \in [0,2\pi)$ such that $M_g x=y$. Denote by $\beta(y)$, $0\leq \beta(y) \leq \pi/2$, the angle enclosed by the geodesic segment $[x,y]$ and 
$\partial(N_r(\gamma))$. We need to show that $\beta(y)$ is arbitrarily close to $\pi/2$ 
provided that $r$ and $d(x,y)$ are sufficiently large.

 Recall that if $p$ and $q$ are points of $\mathbb{H}^3$ in the hyperboloid model then the hyperbolic distance between them is given by the formula 
 \[\cosh \big(d(p,  q)\big) = - \langle p, q\rangle.\]  
 We have 
 \[\sin(\beta)) = \cos \left( \frac{\pi}{2}-\beta(y) \right)  = \langle \overrightarrow{m}_y, \overrightarrow{n} \rangle.\]  
 It now follows from the definitions of $x$, $M_g$, 
 $\overrightarrow{m}_y$, and $\overrightarrow{n}$ that 
 \[ \langle \overrightarrow{m}_y, \overrightarrow{n} \rangle =  
 \frac{\cosh(r) \sinh(r)}{\sinh(d(x,y))}(\cosh(\delta)-\cos(\phi)).\]  Moreover, using the above distance formula, we have 
 \[ \cosh(d(x,y)) = \cosh(\delta)\cosh^2(r)-\cos(\phi)\sinh^2(r)\] 
 and hence
\[\sin(\beta(y)) = \tanh (r) \frac{\cosh(d(x,y))-\cos(\phi)}{\sinh(d(x,y))}\,.\]
For a fixed distance $d=d(x,y)$ the angle $\beta(y)$ becomes minimal if $\phi=0$.  
Therefore
\[\sin(\beta(y))\geq  
\tanh (r) \frac{\cosh(d(x,y))-1}{\sinh(d(x,y))}=
\tanh (r) \tanh\bigg(\frac{d(x,y)}{2}\bigg)\,.\]

Now let  $\beta$, $0<\beta<\pi/2$, be given. 
We choose $r(\beta)>0$ such that $\sin(\beta)<\sin(\beta) \coth(r(\beta))=q<1$ and $\kappa(\beta)$ such that
$q=\tanh(\kappa(\beta)/2)$. Hence for $r\geq r(\beta)$ and for $y\in\partial N_r$ such that
$d(x,y)\geq\kappa(\beta)$ we  obtain
\begin{align*}
\sin(\beta(y)) &\geq \tanh (r) \tanh\big(d(x,y)/2\big)\\
&\geq \tanh (r(\beta)) \tanh\big(\kappa(\beta)/2\big) = \sin(\beta)\,.
\end{align*}
Therefore we have for all $r\geq r(\beta)$ and all $y\in\partial N_r$ such that $d(x,y) \geq \kappa(\beta)$ that $\beta(y)\geq\beta$. This proves the Lemma.
\end{proof}

\section{Generating pairs of 2-bridge knot groups}

In this section we prove that hyperbolic 2-bridge knot groups have infinitely many Nielsen classes of generating pairs. Moreover we prove that there exist closed hyperbolic 3-manifolds that have arbitrarily many Nielsen classes of generating pairs. Those manifolds are obtained by Dehn surgery on $S^3$ at 2-bridge knots.

\medskip  The infinity of Nielsen classes of generating pairs of fundamental groups of  Seifert fibered 2-bridge knot spaces has been known for a long time. For the trefoil knot this is due to Dunwoody and Pietrowsky \cite{DP} and the general case is due to Zieschang \cite{Z2} who in fact gives a complete classification of Nielsen classes of generating pairs. 

\medskip
Let $M$ be the exterior of a hyperbolic 2-bridge knot $\mathfrak k$. Choose $m,l\in\pi_1(M)$ such that $m$ represents the meridian, that $l$ represents the longitude and that $\langle m,l\rangle\cong \mathbb Z^2$ is a maximal peripheral subgroup. Inspecting the Wirtinger presentation shows that 2-bridge knot groups are generated by two meridional elements, i.e. that there exists some $g$ such that $\pi_1(M)=\langle m,gmg^{-1}\rangle$.

\medskip As $(gl^N)\cdot m\cdot (gl^N)^{-1}=g\cdot l^Nml^{-N}\cdot g^{-1}=gmg^{-1}$ it follows that $$P^N:=(m,gl^N)$$ is a generating pair for $\pi_1(M)$ for all $N\in\mathbb Z$. As in Section~\ref{secmichael} we put $M_n:=M(1/n)$ and denote the image of an element $h\in \pi_1(M)$ in $\pi_1(M_n)$ by $h_n$. In particular $P^N_n=(m_n,g_nl_n^N)$ is a generating pair of $\pi_1(M_n)$ for all $n\in\mathbb N$,

\medskip The following theorem is the main theorem of this article.

\begin{thm}\label{main} 
 There exists $N_0\in\mathbb N$ such that for any $N,N'\ge N_0$ there exists some $n_0$ such that for  $n\ge n_0$ the generating pairs $P_n^N$ and $P_n^{N'}$ of $\pi_1(M_n)$ are not Nielsen equivalent.
\end{thm}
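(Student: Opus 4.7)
The plan is to assume $P_n^N \sim P_n^{N'}$ (with $N \neq N'$, $N, N' \geq N_0$, $n \geq n_0$) and derive a contradiction. By the automorphism formulation of Nielsen equivalence, there exists $\alpha \in \mathrm{Aut}(F(a,b))$ such that, setting $\phi \colon F(a,b)\to\pi_1(M_n)$ by $\phi(a) = m_n$, $\phi(b) = g_n l_n^N$, one has $\phi(\alpha(a)) = m_n$ and $\phi(\alpha(b)) = g_n l_n^{N'}$. The element $h_1 := \alpha(a)$ is primitive in $F_2$ with $\phi(h_1) = m_n$, so by Proposition~\ref{osbornezieschang} it is conjugate in $F_2$ to a positive word $w$ in $a^\varepsilon$ and $b^\eta$ for some $\varepsilon, \eta \in \{\pm 1\}$. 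The decisive step is to show that $w$ contains no $b^\eta$-letter, hence $w = a^\varepsilon$.

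The proof of this step is where all the geometric machinery is deployed. Suppose for contradiction that $w$ has at least one $b^\eta$. Substituting $m_n = l_n^{-n}$ rewrites $\phi(w)$ as an alternating product $l_n^{M_0} g_n^{\pm 1} l_n^{M_1} g_n^{\pm 1} \cdots g_n^{\pm 1} l_n^{M_s}$ with $s \geq 1$. Choose $N_0$ large enough that $|l^N m^k|_{\partial H} > 3C$ for every $k \in \mathbb Z$ and $N \ge N_0$, possible since $|N\tau_0 + k|^2 \ge N^2(\Im(\tau_0))^2$; Proposition~\ref{michael}(2) then ensures $|l_n^{M_i}|_{\partial N_n} \ge 2C$ uniformly in $i$ for $n$ large. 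I would then build a bi-infinite piecewise geodesic in $\mathbb H^3$ arising from the iterates of $\phi(w)$, whose vertices lie on boundaries of translates of the tube $N_n = N_{r_n}(A_{l_n})$ and whose segments alternate between \emph{along-tube segments} (geodesics realizing the $\partial N_n$-translation length of a suitable $l_n$-power) and \emph{across-tube segments} (translates of the standard shortest geodesic between $N_n$ and $g_n N_n$ from Proposition~\ref{michael}(b)). All segments have length at least $C$: the across-tube lengths by Proposition~\ref{michael}(a) and the along-tube lengths by the uniform bound just obtained. The across-tube segments meet the tube boundaries perpendicularly by construction, and Lemma~\ref{lemmawinkel} (applied with $r = r_n \to \infty$) forces the along-tube segments also to meet $\partial N_n$ at angles arbitrarily close to $\pi/2$. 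At each vertex both adjacent tangent vectors thus point nearly along the normal to $\partial N_n$; since one segment enters the tube while the other exits, they point in nearly opposite normal directions, so the corner angle is close to $\pi$. Lemma~\ref{quasigeodesic} therefore certifies the piecewise geodesic as a quasigeodesic, giving the uniform lower bound $|\phi(w)| \ge \kappa > 0$. Since $|m_n| = |\Re(u_n)| \to 0$, this contradicts the conjugacy $\phi(w) \sim m_n$ for $n$ large, so $w = a^\varepsilon$.

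With $h_1 \sim a^\varepsilon$ in $F_2$ established, a basis-conjugation reduces to $h_1 = a^\varepsilon$; the classification of primitives paired with $a$ in $F_2$ then forces $h_2$ to be conjugate in $F_2$ to $a^k b^\eta$ for some $k \in \mathbb Z$ and $\eta \in \{\pm 1\}$. A direct computation using $m_n = l_n^{-n}$ and cyclic conjugation shows that $g_n l_n^{N'}$ is conjugate in $\pi_1(M_n)$ either to $g_n l_n^{N - nk}$ (for $\eta = 1$) or to the inverse of $g_n l_n^{N + nk}$ (for $\eta = -1$). Taking traces in $\mathrm{PSL}(2,\mathbb C)$ and using that $g \notin P$ makes $\mathrm{tr}(\rho(g l^M m^q))$ a non-constant $\mathbb C$-affine function of $M\tau_0 + q$, the limit $\rho_n \to \rho$ forces $k = 0$ and $N = N'$, contradicting $N \ne N'$ provided $n_0 > 2\max(N, N')$. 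The main obstacle is the geometric step: combining Proposition~\ref{michael}, Lemma~\ref{lemmawinkel}, and Lemma~\ref{quasigeodesic} to guarantee simultaneously long segments \emph{and} corner angles near $\pi$, uniformly across all admissible positive words $w$.
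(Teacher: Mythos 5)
The core of your argument --- reducing, via Proposition~\ref{osbornezieschang}, to showing that the primitive preimage of $m_n$ is the single letter $a^{\pm 1}$, and proving this by building a $w$-invariant $(B,\alpha)$-piecewise geodesic whose segments alternate between the orthogeodesics joining $N_n$ to $g_nN_n$ and long chords of a single tube, with segment lengths supplied by Proposition~\ref{michael}(2), corner angles by Lemma~\ref{lemmawinkel}, and the resulting lower bound on the translation length of $\phi(w)$ by Lemma~\ref{quasigeodesic} --- is precisely the paper's proof of Lemma~\ref{lem1}, with the same constants and the same contradiction against $|m_n|\to 0$. That part is correct and is the hard part of the theorem.

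The gap is in your endgame, where you deviate from the paper. Having reduced to the statement that $g_nl_n^{N'}$ is conjugate to $\bigl(g_nl_n^{N-nk}\bigr)^{\pm 1}$ for some $k\in\mathbb Z$, the paper rules this out by a second geometric lemma (Lemma~\ref{lem2}), again via invariant piecewise geodesics: the tubes penetrated by the two axes force any conjugator to stabilize both $N_n$ and $g_nN_n$ and hence to be trivial. This works for each fixed large $n$ and \emph{uniformly in $k$}, the uniformity coming from Proposition~\ref{michael}(2). Your trace argument does not close as written. First, the integer $k$ produced by the Nielsen equivalence depends on $n$; the affine formula $\mathrm{tr}\,\rho(g\,l^Mm^q)=\mathrm{tr}\,\rho(g)+c\,(M\tau_0+q)$ is valid only for the cusped holonomy $\rho$, where $l$ and $m$ are parabolic, so to use it in the limit you need $k(n)$ to stabilize along your subsequence; if $k(n)$ is unbounded, $\rho_n(l^Nm^{k(n)})$ converges to nothing and you must instead prove a uniform lower bound on $\bigl|\mathrm{tr}\,\rho_n(g\,l^Nm^k)\bigr|$ for large $|k|$ --- exactly the estimate the paper gets geometrically and which is absent from your sketch. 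Second, conjugacy in $PSL(2,\mathbb C)$ (and the inverse case) only gives equality of traces up to sign; the minus sign yields $2\,\mathrm{tr}\,\rho(g)+c\bigl((N+N')\tau_0+k\bigr)=0$, which is a constraint singling out one bad value of $N+N'$ rather than an immediate contradiction, and must be excluded by a further enlargement of $N_0$ that you do not make. Both points are repairable, but as it stands the final step is not proved; the paper's Lemma~\ref{lem2} is the clean way to finish with the machinery you have already set up.
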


Note that for all $n$, $N$ and $N'$ the generating pairs $P_n^N$ and $P_n^{N'}$ have the same commutator as 
\[ [m_n,g_nl_n^N]=m_n \cdot g_n l_n^N\cdot m_n^{-1} \cdot l_n^{-N}g_n^{-1}=m_ng_nm_n^{-1}g_n^{-1}= [m_n , g_n].\]
Thus we cannot apply Proposition~\ref{Nielsen} to distinguish the Nielsen equivalence classes of $P_n^N$ and $P_n^{N'}$. We get the two results stated in the introduction as immediate corollaries.

\begin{cor} For any $n$ there exists a closed hyperbolic 3-manifold $M$ such that $\pi_1(M)$ has at least $n$ distinct Nielsen classes of generating pairs.
\end{cor}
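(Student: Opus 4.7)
The plan is to deduce the corollary directly from Theorem~\ref{main} via a pigeonhole argument over finitely many Dehn-filling thresholds. To avoid notational clash with the Dehn-filling index, I will rename the integer denoted $n$ in the statement to $k$; the task is then to produce a closed hyperbolic $3$-manifold whose fundamental group has at least $k$ distinct Nielsen classes of generating pairs.

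First I choose $N_0$ as provided by Theorem~\ref{main} and set $N_i := N_0 + i - 1$ for $1 \le i \le k$, giving $k$ pairwise distinct integers, each at least $N_0$. For each ordered pair $(i,j)$ with $1 \le i \ne j \le k$, Theorem~\ref{main} furnishes some index $n_0(i,j)\in\mathbb{N}$ such that for all $m \ge n_0(i,j)$ the generating pairs $P_m^{N_i}$ and $P_m^{N_j}$ of $\pi_1(M_m)$ are not Nielsen equivalent.

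The crucial observation is that there are only finitely many such ordered pairs, so I may set
\[ m_0 := \max_{1 \le i \ne j \le k} n_0(i,j). \]
Then for any $m \ge m_0$ the $k$ generating pairs $P_m^{N_1}, \ldots, P_m^{N_k}$ of $\pi_1(M_m)$ are pairwise non-Nielsen-equivalent, hence represent at least $k$ distinct Nielsen classes of generating pairs. Since $M_m = M(1/m)$ is a closed hyperbolic $3$-manifold once $m$ is large enough, by Thurston's hyperbolic Dehn surgery theorem recalled in Section~\ref{secmichael}, it suffices to take $M := M_m$ for any fixed $m \ge m_0$ that also lies above the threshold supplied by Dehn surgery.

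All the technical content has already been absorbed into Theorem~\ref{main}; the only step here is the trivial remark that finitely many conditions, each holding for all sufficiently large $m$, can be simultaneously enforced by taking the maximum of the corresponding thresholds. Consequently there is no genuine obstacle in passing from Theorem~\ref{main} to the corollary.
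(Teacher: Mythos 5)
Your argument is correct and is exactly the (unwritten) argument the paper intends when it calls this an ``immediate corollary'' of Theorem~\ref{main}: fix $n$ distinct exponents $\ge N_0$, take the maximum of the finitely many thresholds from Theorem~\ref{main}, and choose the Dehn filling index large enough that $M(1/m)$ is also closed hyperbolic. Nothing further is needed.
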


Since Nielsen-equivalent tuples cannot become non-equivalent in a quotient group we also get the following.

\begin{cor} 
Let $\mathfrak k$ be a hyperbolic $2$-bridge knot with knot exterior $M$. Then $\pi_1(M)$ has infinitely many Nielsen classes of generating pairs. 
\end{cor}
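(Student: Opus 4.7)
The plan is to deduce this corollary from Theorem~\ref{main} by the standard observation that a Nielsen equivalence in a group descends to every quotient. The candidate infinite family of generating pairs is the sequence $\{P^N=(m,gl^N)\}_{N\ge N_0}$ of generating pairs of $\pi_1(M)$ constructed just before Theorem~\ref{main}.

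I would argue by contradiction. Suppose $\pi_1(M)$ has only finitely many Nielsen classes of generating pairs; then by the pigeonhole principle there must exist distinct integers $N,N'\ge N_0$ and an automorphism $\alpha\in \operatorname{Aut}(F_2)$ such that, writing $\psi\colon F(a,b)\to \pi_1(M)$ for the homomorphism sending $a\mapsto m$, $b\mapsto gl^N$, we have $\psi(\alpha(a))=m$ and $\psi(\alpha(b))=gl^{N'}$. In other words, $P^N$ and $P^{N'}$ are Nielsen equivalent in $\pi_1(M)$ via some finite sequence of elementary Nielsen transformations.

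Now I would use the Dehn filling quotient $q_n\colon \pi_1(M)\twoheadrightarrow \pi_1(M_n)$ that sends $h$ to $h_n$. Since every Nielsen transformation is defined at the level of the free group $F_2$ by a fixed automorphism independent of the target group, applying $q_n$ componentwise to the sequence of Nielsen moves relating $P^N$ and $P^{N'}$ produces a sequence of Nielsen moves relating their images $P_n^N=(m_n,g_nl_n^N)$ and $P_n^{N'}=(m_n,g_nl_n^{N'})$ in $\pi_1(M_n)$. Hence $P_n^N\sim P_n^{N'}$ in $\pi_1(M_n)$ for every $n\in\mathbb N$.

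This, however, directly contradicts Theorem~\ref{main}, which asserts that for the specific pair $N,N'\ge N_0$ chosen above there is some $n_0$ such that $P_n^N$ and $P_n^{N'}$ are \emph{not} Nielsen equivalent in $\pi_1(M_n)$ for all $n\ge n_0$. The contradiction shows that the assumption of finitely many Nielsen classes is untenable, so $\pi_1(M)$ has infinitely many Nielsen classes of generating pairs. There is no real obstacle here beyond verifying that Nielsen equivalence is preserved by group homomorphisms, which is immediate from the definition in terms of elementary moves (or equivalently from the reformulation using a homomorphism $F_2\to G$ composed with an automorphism of $F_2$).
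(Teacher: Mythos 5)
Your proof is correct and follows essentially the same route as the paper, which justifies the corollary with the single remark that ``Nielsen-equivalent tuples cannot become non-equivalent in a quotient group'' and then invokes Theorem~\ref{main}. Your added pigeonhole framing is harmless but unnecessary: for any two distinct $N,N'\ge N_0$ the descent to $\pi_1(M_{n})$ for $n\ge n_0$ already shows directly that $P^N\not\sim P^{N'}$ in $\pi_1(M)$, so the family $\{P^N\}_{N\ge N_0}$ is pairwise inequivalent.
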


We will prove two lemmas before  giving the proof of Theorem~\ref{main}. We use the same  notations as in Section~\ref{secmichael}.

\begin{lem}\label{lem1} There exists $N_1$ such that for any $N\ge N_1$ there exists some $n_1\in\mathbb N$ so that if $n\ge n_1$, $\varepsilon,\eta\in\{-1,1\}$, and $w$ is  a positive word  in $m_n^{\eta}$ and $(g_nl_n^N)^{\varepsilon}$, then $w$ represents an element of $G_n$ which is conjugate in $G_n$ to $m_n^{\pm 1}$ if and only if $w$ consists of the single letter $m_n^{\eta}$.
\end{lem}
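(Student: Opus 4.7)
The backward direction is immediate, so we show that if $w$ is not the single letter $m_n^\eta$ then the translation length $|w|$ is bounded below by a positive constant uniform in $w$ for all $n$ sufficiently large; since $|m_n^{\pm 1}|=|\mathrm{Re}(u_n)|\to 0$, this rules out conjugacy to $m_n^{\pm 1}$.

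If $w$ contains no $(g_n l_n^N)^\varepsilon$ letter then $w=m_n^{\eta k}$ with $k\ge 2$. Because $G_n$ is a torsion-free cocompact Kleinian group, $m_n$ is loxodromic with complex translation length $u_n\notin i\mathbb R$. Any $PSL(2,\mathbb C)$-conjugacy to $m_n^{\pm 1}$ would force $(\eta k\mp 1)u_n\in 2\pi i\mathbb Z$, hence $\eta k=\pm 1$, contradicting $k\ge 2$.

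Assume now that $w$ contains at least one $(g_n l_n^N)^\varepsilon$; replacing $w$ by $w^{-1}$ if necessary we may assume $\varepsilon=1$. Using $m_n=l_n^{-n}$ and cyclically conjugating, $w$ becomes a product $\prod_{j=1}^{s} g_n l_n^{A_j}$ with $A_j=N-\eta k_j n$, $k_j\ge 0$, and $s\ge 1$. Since $\tau_0\notin\mathbb R$, $\inf_{k\in\mathbb Z}|N\tau_0+k|=N|\mathrm{Im}(\tau_0)|\to\infty$ with $N$. Fix $\xi>0$ and let $B_1,\theta_0$ come from Lemma~\ref{quasigeodesic}; choose $\beta\in(0,\pi/2)$ with $\pi/2+\beta\ge\theta_0$, and let $\kappa(\beta),r(\beta)$ come from Lemma~\ref{lemmawinkel}. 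We may then select $N_1$, a horoball $H$, and $n_1$ so that for $N\ge N_1$ and $n\ge n_1$, Proposition~\ref{michael} gives: $|l_n^{A_j}|_{\partial N_n}\ge C$ uniformly in the integer $A_j$ of the form $N-kn$ (part~(2)), $r_n\ge r(\beta)$, $d(N_n,hN_n)\ge C$ for all $h\in G_n\setminus\langle l_n\rangle$ (part~(a)), and the $C$-neighborhood of the common perpendicular $[a,b]$ from $\partial N_n$ to $\partial(g_n N_n)$ meets no other translate of $N_n$ (part~(b)); here $C$ is chosen large relative to $B_1$, $\kappa(\beta)$, and a certain $n$-bounded geometric quantity introduced below.

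Set $V_j=v_1\cdots v_j$ with $v_j=g_n l_n^{A_j}$, and $a_j=V_{j-1}(a)$, $b_j=V_{j-1}(b)$. Because $l_n^{A_j}$ preserves $N_n$ setwise one has $V_jN_n=V_{j-1}g_n N_n$, so $b_j$ and $a_{j+1}$ both lie on $\partial V_jN_n$. The $w$-invariant piecewise geodesic $\gamma$ alternates translated common perpendiculars $[a_j,b_j]$ (of length $d(a,b)\ge C$, meeting both tube boundaries at exactly $\pi/2$) with chords $[b_j,a_{j+1}]$ inside $V_jN_n$. The triangle inequality applied to the points $g_n^{-1}(b)$, $a$, $l_n^{A_j}(a)$ on $\partial N_n$, combined with the identity $d(x,l_n^{A_j}(x))=|l_n^{A_j}|_{\partial N_n}$ for $x\in\partial N_n$, yields the chord length lower bound $|l_n^{A_j}|_{\partial N_n}-d(a,g_n^{-1}(b))$. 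Since $d(a,g_n^{-1}(b))$ converges as $n\to\infty$ to the analogous bounded quantity for the horoballs $H$ and $gH$, for $C$ large the chords have length $\ge\kappa(\beta)$, and by Lemma~\ref{lemmawinkel} they meet their tube boundaries at angle $\ge\beta$. Consequently every vertex angle of $\gamma$ is $\ge\pi/2+\beta\ge\theta_0$, so Lemma~\ref{quasigeodesic} identifies $\gamma$ as a quasigeodesic within Hausdorff distance $\xi$ of the axis of $w$. The per-period progress of $\gamma$, and therefore $|w|$, is bounded below by a positive constant depending only on the quasigeodesic constants and on $C$. The main technical obstacle is orchestrating the simultaneous uniform bounds on chord lengths, tube separation and $\partial N_n$-geometry supplied by Proposition~\ref{michael}, and verifying that the auxiliary distance $d(a,g_n^{-1}(b))$ stays bounded as $n\to\infty$.
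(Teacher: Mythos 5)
Your plan is correct and follows essentially the same route as the paper's proof: dispose of proper powers of $m_n^\eta$ by the loxodromic translation-length/conjugacy criterion, and for words containing a $(g_nl_n^N)^{\pm1}$ letter build a $w$-invariant piecewise geodesic alternating the common perpendiculars between translates of the tube $N_n$ with long chords on the tube boundaries, using Proposition~\ref{michael} for the uniform lower bounds $|l_n^{N-kn}|_{\partial N_n}\ge C$, Lemma~\ref{lemmawinkel} for the vertex angles, and Lemma~\ref{quasigeodesic} to conclude that the translation length of $w$ is bounded below while $|m_n|\to 0$. The only cosmetic differences are that the paper fixes explicit constants ($\xi=1/2$, $B\ge 100$, hence $|w|\ge 198>1>|m_n|$) and phrases the power case as ``a loxodromic element is never conjugate to a proper power of itself.''
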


\begin{proof} The ``if'' direction is trivial. To prove the ``only if'' direction, we prove its contrapositive by showing that if $w$ is not a power of $m_n^{\eta}$ then $w$ is not conjugate in $\hbox{Isom}(\mathbb H^3)$ to $m_n^{\pm 1}$. The contrapositive then follows since a loxodromic element is never conjugate to a proper power of itself. We will prove this in the special case that $\eta=\varepsilon=1$, the other cases are analogous.

Take $\xi=1/2$. Using this choice of $\xi$, Lemma~\ref{quasigeodesic} gives $B_1>0$ and $\theta_0\in[0,\pi)$. Choose $B\ge \max(100,B_1)$ and $\alpha\in [\theta_0,\pi)\cap (\pi/2,\pi)$. Then the conclusion of Lemma~\ref{quasigeodesic} holds for our choice of $\xi$ and any bi-infinite $(B,\alpha)$-piecewise geodesic. Furthermore choose $\kappa:=\kappa(\alpha-\pi/2)$ and 
$r:=r(\alpha-\pi/2)$ as in Lemma~\ref{lemmawinkel}. 

\smallskip 

By Proposition~\ref{michael} there is a horoball $H$ centered at the fixed point of $\langle m,l\rangle$ and a sequence $(r_n)$ of positive real numbers so that the four conclusions of the proposition hold for $C=B$ and the convergent sequence $(g_n)$. In particular there exist $\hat n$ such that $d(N_n,g_nN_n)\ge B$ for $n\ge \hat n$. Now let $x\in H$, $y\in gH$ and $x_n\in N_n$, $y_n\in g_nN_n$ for all $n\in\mathbb N$ be such  that $[x,y]$ is the geodesic segment between $H$ and $gH$ and that $[x_n,y_n]$ is the geodesic segment between $N_n$ and $g_nN_n$ for all $n\in\mathbb N$.

	\begin{figure}[htb]
		\input{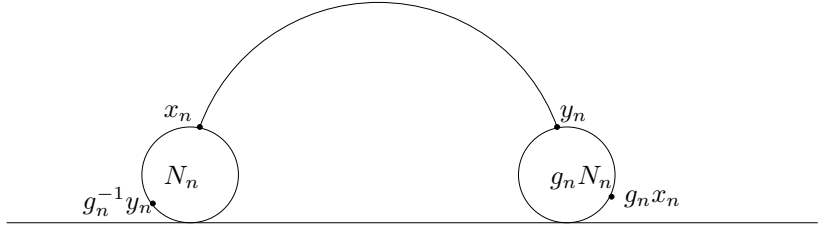}
		\caption{The horoball approximation $N_n$ and its translate by $g_n$}
		\label{fig:horoball}
	\end{figure}

\smallskip Choose $t$ such that $d(g_nx_n,y_n)\le t$ for all $n$. Such $t$ clearly exists as the segments $[x_n,y_n]$ converge to the segment $[x,y]$ and $g_n$ to $g$. Choose further $N_1$ such that $|l^Nm^k|_{\partial H}\ge B+t+\kappa+1$ for all $N\ge N_1$ and $k\in\mathbb Z$. Such $N_1$ clearly exists as for any constant $K$ there are only finitely many elements $h\in\langle m,l\rangle$ such that $|h|_{\partial H}\le K$.

Now fix $N\ge N_1$. Recall that $m_n=l_n^{-n}$. It now follows from Proposition~\ref{michael}~(2) (by chosing $\eta$ sufficiently small) that there exists some $\tilde n\ge \hat n$ such that $|l_n^Nm_n^k|_{\partial N_n}\ge B+t+\kappa$ for all $k\in\mathbb Z$ and $n\ge \tilde n$.

\smallskip Now choose $n_1\ge \tilde n$ such that $|m_n|_{\mathbb H^3}<1$ and $r_n>r(\alpha-\pi/2)$ for all $n\ge n_1$; this is clearly possible as the element $m_n$ converges to the parabolic element $m$ and $r_n$ tends to infinity as $n$ tends to infinity. 
Fix $n\ge n_1$ and suppose that $w$ is not a power of $m_n$. We will show that the translation length of $w$ is at least 198. This is strictly larger than the translation length of $m_n$, hence this will prove the lemma. Since $w$ is not a proper power of $m_n$ we may conjugate $w$ to assume that it is of the form

\[
(g_nl_n^N)m_n^{b_1}\cdot\ldots\cdot (g_nl_n^N)m_n^{b_s}
\] 
with $b_i\ge 0$ for $1\le i\le s$. Thus $w$ can be rewritten as a product $$(g_np_1)\cdot\ldots\cdot (g_np_s)$$ where 
$p_i=l_n^Nm_n^{b_i}$ for $1\le i\le s$. By the above choices the translation length of all $p_i$ on $\partial N_n$ is at least $B+t+\kappa$. 

\smallskip We now construct a $w$-invariant bi-infinite $(B,\alpha)$-piecewise geodesic $\gamma_w$ containing $x_n$. We first construct a $(B,\alpha)$-piecewise geodesic $\gamma_0$ from $x_n$ to $wx_n$ and put $\gamma_i=w^i\gamma_0$. We then put $$\gamma_w:=\ldots \gamma_{-2}\cdot \gamma_{-1}\cdot \gamma_0\cdot \gamma_1\cdot \gamma_2\ldots $$ which clearly implies the $w$-invariance of $\gamma_w$. The fact that $\gamma_w$ is also a $(B,\alpha)$-piecewise geodesic follows immediately from the construction.

\smallskip  For $1\le i\le s$ put $w_i=(g_np_1)\cdot\ldots \cdot (g_np_i)$,  $x_n^i=w_ix_n$ and  $y_n^i=w_iy_n$. We then put  $$\gamma_0:=[x_n,y_n]\cdot [y_n,x^1_n]\cdot [x^1_n,y^1_n]\cdot [y^1_n,x^2_n]\cdot\ldots\cdot [x^{s-1}_n,y^{s-1}_n]\cdot [y^{s-1}_n,x^s_n=wx_n].$$ 

	\begin{figure}[htb]
		\input{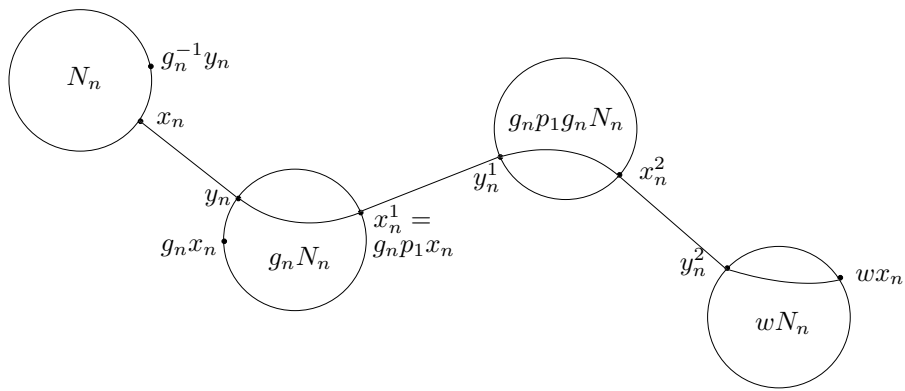}
		\caption{The piecewise geodesic $\gamma_0$ for $w=(g_np_1)(g_np_2)(g_np_3)$}
		\label{fig:gamma_0}
	\end{figure}

Now $x^i_n\in\partial (w_iN_n)=w_i\partial N_n$ and $y^i_n\in\partial (w_{i+1}N_n)$.  The segments $[x_n^i,y_n^i]$ are of length at least $B$ by assumption and are perpendicular to the respective translates of $\partial N_n$. Note further that the segments $[y_n^i,x_n^{i+1}]$ are of length at least $B+ \kappa$. Indeed this follows from the triangle inequality and the fact that $$d(w_{i}g_nx_n,y_n^i)=d(w_{i}g_nx_n,w_iy_n)=d(g_nx_n,y_n)\le t$$ and $$d(x_n^{i+1},w_{i}g_nx_n)=d(w_{i+1}x_n,w_ig_nx_n)=d(w_ig_np_{i+1}x_n,w_ig_nx_n)=$$ $$=d(p_{i+1}x_n,x_n)\ge B+t+\kappa.$$ It follows from the choice of $\kappa$ and Lemma~\ref{lemmawinkel} that the segments $[y_n^i,x_n^{i+1}]$ enclose angles greater or equal than $\alpha-\pi/2$ with the respective translates of $\partial N_n$. This proves that $\gamma_i$ and therefore $\gamma_w$ is a $(B,\alpha)$-piecewise geodesic. 

By Lemma~\ref{quasigeodesic} $\gamma_w$ is a quasigeodesic that lies in the $\frac{1}{2}$-neighborhood of the geodesic $\beta$ that has the same ends. Clearly $\beta$ is invariant under the action of $w$, thus we must have $\beta=A_w$ where $A_w$ is the axis of~$w$.

We argue that the translation length of $w$ must be at least 198. Recall that $\gamma_0$ is a $(B,\alpha)$-piecewise geodesic consisting of at least $2$ segments of length at least $B\ge 100$. As $\gamma_0$ lies in the $\frac{1}{2}$-neighborhood of $\beta$ it follows that each of the geodesic segments projects under $p_{\beta}$ to a geodesic segment of length at least $99$. Thus $$d(p_{\beta}(x_n),p_{\beta}(wx_n))=d(p_{\beta}(x_n),wp_{\beta}(x_n))\ge 2\cdot 99=198.$$ Now $d(p_{\beta}(x_n),wp_{\beta}(x_n))$ is the translation length of $w$, so the lemma is proven. 
\end{proof}

\begin{lem}\label{lem2} There exists $N_2$ such that for all $N\neq N'\ge N_2$ there exists $n_2\in\mathbb N$ such that for $n\ge n_2$ and $k\in\mathbb Z$ the elements $g_nl_n^N$ and 
$(g_nl_n^{N'+nk})^\varepsilon$ are not conjugate in $\pi_1(M_n)$ for $\varepsilon\in\{\pm 1\}$.
\end{lem}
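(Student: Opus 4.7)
The plan is to distinguish $g_n l_n^N$ from the conjugacy class of $(g_n l_n^{N'+nk})^{\pm 1}$ by comparing their $SL(2,\mathbb{C})$-traces: conjugate elements of $\pi_1(M_n)\subset PSL(2,\mathbb{C})$ have equal traces up to sign, so if $g_n l_n^N$ were conjugate to $(g_n l_n^{N'+nk})^{\pm 1}$ in $\pi_1(M_n)$ one would have $\operatorname{tr}(g_n l_n^N)=\pm\operatorname{tr}(g_n l_n^{N'+nk})$. I will show this trace equation has no integer solution $k$ once $N\neq N'\geq N_2$ and $n\geq n_2$.

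Using the upper-triangular form of $\rho_n(l)$ from Section~\ref{secmichael}, a direct matrix computation gives $T_n(a):=\operatorname{tr}(g_n l_n^a)=P_n\,e^{av_n/2}+Q_n\,e^{-av_n/2}$ for explicit coefficients $P_n,Q_n$ depending on $v_n,\tau_n$ and the matrix entries of $g_n$. The identity $nv_n\equiv -u_n\pmod{2\pi i}$ then rewrites $T_n(N'+nk)$ in terms of $z:=e^{-ku_n/2}$, so the trace-matching equation becomes a quadratic in $z$ for each sign choice, admitting at most four roots in total. Passing to the limit $n\to\infty$, $\rho_n\to\rho$ (the holonomy of the cusped knot exterior) and $T_n(a)\to\operatorname{tr}(gl^a)=\operatorname{tr}(g)+Ca\tau_0$ for fixed $a$, where $C$ is the $(2,1)$-entry of $\rho(g)$. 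Here $C\neq 0$ because otherwise $\rho(g)$ would stabilize $\infty$ and so $gmg^{-1}$ would commute with $m$, contradicting the fact that $(m,gmg^{-1})$ generates the non-elementary group $\pi_1(M)$. Since $\tau_0\notin\mathbb{R}$, the limit equation $\operatorname{tr}(gl^N)=\pm\operatorname{tr}(gl^{N'}m^{-k})$ analyses cleanly: the $+$ case requires $N=N'$ and $k=0$ (excluded by $N\neq N'$), while the $-$ case pins down at most one exceptional pair $(N+N',k)$, coming from the imaginary and real parts of $-2\operatorname{tr}(g)/C$. I choose $N_2$ so that $2N_2+1$ exceeds that exceptional value of $N+N'$; then for $N\neq N'\geq N_2$ the limit trace equation has no solution.

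To upgrade the limit inequality to finite $n$ uniformly in $k\in\mathbb{Z}$, I combine two regimes. For bounded $|k|\leq K_0$, the uniform convergence of traces on compact sets together with the limit separation yields $T_n(N)\neq\pm T_n(N'+nk)$ for $n$ sufficiently large. For $|k|>K_0$, the at most four roots of the $z$-quadratic — after dividing by the diverging factor $P_n\sim C/u_n$ — converge to $\pm 1,\pm i$; since $u_n\sim 2\pi i/(n\tau_0)$ with $\tau_0\notin\mathbb{R}$, the discrete set $\{e^{-ku_n/2}:k\in\mathbb{Z}\}$ stays uniformly bounded away from these four limit values for large $n$, except near $k=0$ (already covered by the bounded-$k$ argument) and near multiples of $n$, where a direct growth estimate for the leading exponential shows $|T_n(N'+nk)|\sim n$ greatly exceeds the bounded $|T_n(N)|$. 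Patching these two regimes gives a single $n_2$ working for all $k\in\mathbb{Z}$. The main obstacle is this last uniform-avoidance step, which ultimately rests on the non-reality of the cusp parameter $\tau_0$ and on the explicit asymptotics $u_n\sim 2\pi i/(n\tau_0)$ controlling the lattice $\{e^{-ku_n/2}\}$.
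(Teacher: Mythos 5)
Your approach is genuinely different from the paper's: you work algebraically with $SL(2,\mathbb C)$-traces of the explicit holonomies $\rho_n$, whereas the paper argues geometrically, building $w_1$- and $w_2$-invariant piecewise geodesics, locating the axes $A_{w_1}$, $A_{w_2}$ within distance $\frac12$ of them, and showing that a conjugating element $h$ would have to preserve the pattern of tubes $N_n$, $g_nN_n$ met by these axes, forcing $h=1$ and hence $w_1=w_2^{\varepsilon}$. Your limit computation ($\operatorname{tr}(gl^{a})=\operatorname{tr}(g)+Ca\tau_0$ with $C\neq 0$, the $+$ case excluded by $\tau_0\notin\mathbb R$, the $-$ case excluded for all but one value of $N+N'$) is correct and is a clean way to handle each fixed $k$; if completed it would even give non-conjugacy in all of $PSL(2,\mathbb C)$ and would bypass the piecewise-geodesic machinery entirely for this lemma.

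However, the step that patches the two regimes has a genuine gap. You claim that for $|k|>K_0$ the set $\{e^{-ku_n/2}:k\in\mathbb Z\}$ stays \emph{uniformly} bounded away from the limiting roots of the $z$-quadratic. This is false in the range $K_0<|k|\ll n$: there $|e^{-ku_n/2}-1|\approx |k|\,|u_n|/2\to 0$, so these lattice points converge to $1$, which \emph{is} a limiting root (the limiting roots are $\pm 1$, not $\pm1,\pm i$: the quadratic tends to $z^2-1=0$ after the $(-1)^k$ from $e^{nkv_n/2}$ is absorbed into the sign ambiguity). The real issue is one of scale: the actual root near $1$ sits at distance $\Theta(|u_n|)=\Theta(1/n)$ from $1$, which is exactly the spacing of the lattice points $e^{-ku_n/2}$ with bounded $k$, so comparing either of them to the \emph{limit} value $1$ decides nothing. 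One must expand to first order in $u_n$: the root near $1$ is $1+\frac{u_n}{2}\bigl(\pm(\operatorname{tr}(g)/C+N\tau_0)-(\operatorname{tr}(g)/C+N'\tau_0)\bigr)+o(u_n)$ while $e^{-ku_n/2}=1-\frac{k u_n}{2}+O(|ku_n|^2)$, and the required separation becomes $|{-k}-(N-N')\tau_0|\ge |\Im\tau_0|$ for all $k\in\mathbb Z$ (matching signs) and the analogous lower bound coming from the choice of $N_2$ in the opposite-sign case. That computation does close the gap — and it is where $\tau_0\notin\mathbb R$ genuinely enters for \emph{unbounded} $k$ — but it is missing from your argument, and without it the bounded-$k$ and unbounded-$k$ regimes do not meet.
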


\begin{proof} The proof is similar to the proof of Lemma~\ref{lem1}. Choose the constants $B$, $\alpha$, $\kappa$, $t$, $N_1$, the segments $[x_n,y_n]$ and the horoball $H$ as in the proof of Lemma~\ref{lem1} and put $N_2:=N_1$. Choose $N\neq N'\ge N_2$. 
Now by choosing $n_2>N+N'$ sufficiently large it follows as before that $$|l_n^{N+kn}|_{\partial N_n},|l_n^{N'+kn}|_{\partial N_n}\ge B+t+\kappa$$
for all $n\ge n_2$ and $k\in \mathbb Z$. Put $w_1=g_nl_n^N$ and $w_2=g_nl_n^{N'+nk}$, note that $w_1\neq w_2$ as $l_n$ is of infinite order, $N\neq N'$ and $n>N+N'$. We can construct the $w_1$-, $w_2$-,  and $w_1w_2$-invariant $(B,\alpha)$-piecewise geodesics $\gamma_{w_1}$, $\gamma_{w_2}$ and $\gamma_{w_1w_2}$ with the same properties as before. Note that by Theorem~\ref{michael} (a) and (b) and as $C=B\ge 100$ we can further assume that the $1$-neighborhoods of $[x_n,y_n]$, $N_n$ and $g_nN_n$ do not intersect any translate of $N_n$ except $N_n$ and $g_nN_n$. Note that the existence of the piecewise geodesic $\gamma_{w_1w_2}$ on which $w_1w_2$ acts non-trivially implies that $w_1w_2\neq 1$, i.e. that $w_1\neq w_2^{- 1}$.

\smallskip Now the axes $A_{w_1}$ and $A_{w_2}$ are $\frac{1}{2}$-Hausdorff-close to $\gamma_{w_1}$ and $\gamma_{w_2}$. As the $1$-neighborhoods of $\gamma_{w_i}$ does not meet any translates of $N_n$ except the $w_i^kN_n$ this implies that the translates of $N_n$ intersected by the $\frac{1}{2}$-neighborhood $\bar A_{w_i}$ of $A_{w_i}$ are precisely the translates $w_i^kN_n$ for $i=1,2$. Note that $N_n$ and $g_nN_n$ are intersected by both  $\bar A_{w_1}$ and $\bar A_{w_2}$. Now if $w_1$ and $w_2^\varepsilon$ are conjugate then there must exist some $h\in\pi_1(M_n)$ such that $w_2^\varepsilon=hw_1h^{-1}$ which implies that   $hA_{w_1}=A_{w_2^\varepsilon}=A_{w_2}$; $h$ must in particular map the translates intersected by  $\bar A_{w_1}$ to those intersected by $\bar A_{w_2}$.
 
 After replacing $h$ by $hw_1^l$ for some $l\in\mathbb Z$ we can assume that $h$ fixes both $N_n$ and $gN_n$, indeed $h$ cannot exchange $N_n$ and $gN_n$ as it would otherwise fix the midpoint of $[x_n,y_n]$ and therefore be elliptic. Note that this replacement does not alter the fact that $w_2^\varepsilon=hw_1h^{-1}$.  As the intersection of the stabilizers of $N_n$ and $g_nN_n$ is trivial this implies that $h=1$ i.e. that $w_1=w_2^{\varepsilon}$. This is clearly a contradiction, thus $w_1$ and $w_2^{\varepsilon}$ are not conjugate. \end{proof}

\begin{proof}[Proof of Theorem~\ref{main}] Let $N_1$ and $N_2$ be as Lemma~\ref{lem1} and Lem\-ma~\ref{lem2}. Take $N_0:=\max(N_1,N_2)$ and  $N\neq N'\ge N_0$. Lemmas \ref{lem1} and \ref{lem2} then give numbers $n_1$ and $n_2$. Let $n_0:=\max(n_1,n_2)$. We show that $P_n^N$ and $P_n^{N'}$ are not Nielsen equivalent in $\pi_1(M_n)$ for any $n\ge n_0$. 

\smallskip
Since $P_n^{N'}$ is a generating pair for $G_n$, we have an epimorphism $\psi:F(a,b)\to G_n$ that takes $a$ to $m_n$ and $b$ to $g_nl_n^{N'}$. Suppose, for a contradiction, that $P_n^N$ and $P_n^{N'}$ are Nielsen equivalent. Hence there is an automorphism $\alpha:F(a,b)\to F(a,b)$ so that $\psi\circ\alpha(a)=m_n$ and $\psi\circ\alpha(b)=g_nl_n^N$. Let $b_1:=\alpha(a)$ and $b_2:=\alpha(b)$.

\smallskip
Using Proposition~\ref{osbornezieschang}, we have that $b_1$ is conjugate in $F(a,b)$ to a positive word $w$ in $a^{\varepsilon}$ and $b^\eta$ for some $\varepsilon,\eta\in\{-1,1\}$. Therefore we have $u\in F(a,b)$ such that $\psi(u)\psi(w)\psi(u^{-1})=m_n$. It follows from Lemma~\ref{lem1} that $w=a^\varepsilon$. As in the last part of the proof of Theorem~\ref{graphsofgroups}, we must have $b_2=ua^{k_1}b^{\sigma}a^{k_2}u^{-1}$ for some $k_1,k_2\in\mathbb Z$ and $\sigma\in\{-1,1\}$. Hence, $\psi(b_2)=g_nl_n^N$ is conjugate to $\left(g_nl_n^{N'-kn}\right)^{\nu}$ for some $k\in \mathbb Z$ and $\nu\in\{-1,1\}$. This contradicts Lemma~\ref{lem2}.
\end{proof}

\end{document}